\documentclass[reqno, 12pt]{amsart}

\usepackage{array}
\usepackage{amsmath}
\usepackage{amsfonts}
\usepackage{amssymb}
\usepackage{enumerate}
\usepackage{amsthm}
\usepackage{amsmath, amscd}
\usepackage{caption}
\usepackage[usenames, dvipsnames]{color}
\usepackage{xy}
\xyoption{all}
\usepackage{hyperref}
\usepackage{tikz}
\usepackage{tikz-cd}
\usepackage{tikz-3dplot}
\tdplotsetmaincoords{70}{120}
\usepackage{marginnote}
\usetikzlibrary{matrix,arrows,backgrounds}
\newtheorem{theorem}{Theorem}[section]
\newtheorem{lemma}[theorem]{Lemma}
\newtheorem{proposition}[theorem]{Proposition}
\newtheorem{corollary}[theorem]{Corollary}
\newtheorem{conjecture}[theorem]{Conjecture}

\theoremstyle{definition}
\newtheorem{definition}[theorem]{Definition}

\newtheorem{remark}[theorem]{Remark}

\newtheorem*{theorem*}{Theorem}
\newtheorem*{corollary*}{Corollary}
\newtheorem*{conjecture*}{Conjecture}
\newtheorem*{question*}{Question}
\numberwithin{equation}{section}

\newcommand{\newterm}{\textsf}

\newcommand{\dbcoh}[1]{\operatorname{D}^{\operatorname{b}}(\operatorname{coh }#1)}
\newcommand{\dbmod}[1]{\operatorname{D}^{\operatorname{b}}(\operatorname{mod }#1)}

\newcommand{\Hom}{\operatorname{Hom}}

\newcommand{\Z}{\mathbb{Z}}
\newcommand{\R}{\mathbb{R}}
\newcommand{\C}{\mathbb{C}}
\newcommand{\Q}{\mathbb{Q}}
\newcommand{\A}{\mathbb{A}}
\def\O{\mathcal{O}}

\newcommand{\End}{\operatorname{End}}
\newcommand{\spec}{\operatorname{Spec}}
\newcommand{\Rhom}{\operatorname{RHom}}

\newcommand{\Ext}{\operatorname{Ext}}
\newcommand{\gldim}{\operatorname{gl}\dim}
\newcommand{\tot}{\operatorname{tot}}
\newcommand{\cone}{\operatorname{Cone}}

\newcommand{\conv}{\operatorname{Conv}}

\title[NCCRs for (almost) simplicial toric algebras]{Non-commutative crepant resolutions for (almost) simplicial toric algebras}

\author{Aimeric Malter \textsuperscript{1}}
\email{aimericmalter@bimsa.cn}
\author{Artan Sheshmani \textsuperscript{1,2}}
\email{artan@mit.edu}
\address{\textsuperscript{1}Beijing Institute of Mathematical Sciences and Applications, No. 544, Hefangkou Village, Huaibei Town, Huairou District, Beijing 101408}
\address{\textsuperscript{2}Massachusetts Institute of Technology, IAiFi Institute, 77 Massachusetts Ave, 26-555. Cambridge, MA 02139}

\begin{document}

\begin{abstract}
Given a rational convex polyhedral Gorenstein cone constructed as cone over a lattice polytope $P$, we establish that toric non-commutative crepant resolutions (NCCRs) of its associated toric algebra descend to toric NCCRs of the algebras associated to faces of the polytope $P$. As consequence, we present two new, short proofs to the existence of toric NCCRs for simplicial affine toric Gorenstein algebras and for almost simplicial affine toric Gorenstein algebras, i.e. those associated to cones $\sigma$ with $\dim\sigma+1$ extremal rays. 
\end{abstract}
\maketitle
\section{Introduction}\label{Sec:Intro}

Resolutions of singularities have long been an intense area of study in algebraic geometry. Oftentimes, the underlying geometric objects become more complicated after resolving them (for instance, most singular affine schemes do not have affine resolutions). The classical approach to deal with this fact of life is to simply expand the class of geometric objects one considers. A second approach that emerged is to abstracise and distill from the resolutions some algebraic properties which in turn can be seen as defining characteristics of resolutions. For instance, Bondal-Orlov \cite{BO02} and Kawamata \cite{Kaw02} conjectured that given a normal algebraic variety $X$ with two crepant resolutions $\pi_i:Y_i\rightarrow X$, $i=1,2$, there is an equivalence of triangulated categories $\dbcoh{Y_1}\cong \dbcoh{Y_2}$ linear over $X$.

Verifying this conjecture for a simple flop of three-dimensional varieties, Van den Bergh \cite{VdB3Dflops} used tilting theory to construct two Morita equivalent algebras $\Lambda_i$ such that $\dbmod{\Lambda_i}\cong \dbcoh{Y_i}$. In a certain sense thus, the algebras $\Lambda_i$ themselves constitute crepant resolutions. Van den Bergh \cite{VdB04} formalises this notion, introducing \newterm{non-commutative (crepant) resolutions}, abbreviated as NC(C)Rs. For a normal noetherian domain $R$, an NCR is an algebra of the form $\Lambda=\End_R(M)$ for $M\in \operatorname{ref} R$ such that $\gldim\Lambda<\infty$. If further $R$ is Gorenstein, we call $\Lambda$ crepant if it is a maximal Cohen-Macaulay $R$-module. In general, the existence of such resolutions is thus far an open question even for the case of affine toric Gorenstein algebras, i.e. the algebras associated to strictly convex rational polyhedral Gorenstein cones. However, certain classes of cones have been shown to possess NCCRs, using a plethora a methods of algebraic and combinatorial nature (see, e.g.,\cite{Broomhead, svdbt2, FMS19,Tomo25}). \\
All Gorenstein cones $\sigma$ can be written (after change of basis) as $\cone(P\times\{1\})$, where $P$ is a lattice polytope. In \cite{MS25}, we conjecture that for a lattice polytope $Q$ lattice equivalent to a face of $P$, if the coordinate ring of $\sigma_P=\cone(P\times \{1\})$ admits an NCCR, so does the coordinate ring of $\sigma_Q=\cone(Q\times\{1\})$. The main result of the present paper provides a proof of this conjecture for the special case of \newterm{toric} NCCR, which arise when the rank of the reflexive $R$-module $M$ is 1.

\begin{theorem*}(Theorem \ref{Thm:FaceOfReflexiveTORICNCCR})
    Let $Q\subset \R^k$ be a lattice polytope and let $\sigma$ be the cone $\sigma=\cone(Q\times\{1\})\subset \R^{k+1}$ together with its toric algebra $R_\sigma=k[\sigma^\vee\cap M_\sigma]$, where $M_\sigma$ is the character lattice of the toric variety defined by $\sigma$. Suppose $Q$ is lattice equivalent to a face $F$ of a lattice polytope $P\subset \R^n$. Consider the cone $\sigma'=\cone(P\times\{1\})\subset\R^{n+1}$. Denoting the corresponding character lattice by $M'$, let $R'=k[(\sigma')^\vee\cap M']$ be the toric algebra associated to $\sigma'$. Suppose $R'$ has a toric NCCR $\Lambda'$. Then $R$ has a toric NCCR.
\end{theorem*}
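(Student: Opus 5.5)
The plan is to realize $R$ as a localization-and-retract of $R'$ that is compatible with the torus grading, and then transport the NCCR $\Lambda'$ along this operation. First, since $Q$ is lattice equivalent to the face $F$ of $P$, and the toric algebras depend only on the lattice equivalence class, we may replace $Q$ by $F$ and $\sigma$ by $\tau:=\cone(F\times\{1\})$. This cone is a face of $\sigma'$. Dually, $\tau=\sigma'\cap m^\perp$ for some $m\in(\sigma')^\vee\cap M'$ lying in the relative interior of the dual face $\tau^*$.

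Localizing at the monomial $\chi^m$ gives
\[
R'_{\chi^m}=k[(\sigma')^\vee\cap M'+\Z m]=k[\tau^\vee\cap M'],
\]
and $\tau^\vee\cap M'$ splits as $(\tau^\vee\cap M_\tau)\times(\tau^\perp\cap M')$. Hence $R'_{\chi^m}\cong R_\tau\otimes_k k[\Z^{n-k}]$, a Laurent polynomial extension of $R_\tau\cong R$. Write $\Lambda'=\End_{R'}(M)$ with $M$ a rank one reflexive module. Then $M$ is isomorphic to a direct sum of divisorial ideals, i.e.\ of modules of conic type $k[(u+\sigma^{\prime\vee})\cap M']$ up to shift, indexed by classes in $\operatorname{Cl}(R')$. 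Localization preserves finite global dimension, reflexivity and the maximal Cohen--Macaulay property. Therefore $\Lambda'_{\chi^m}=\End_{R'_{\chi^m}}(M_{\chi^m})$ is an NCCR of $R\otimes_k k[\Z^{n-k}]$, and it is still toric: $M_{\chi^m}$ is a sum of rank one reflexives, now indexed by $\operatorname{Cl}(R'_{\chi^m})\cong\operatorname{Cl}(R)$, the map being the restriction of divisor classes to the rays of $\tau$.

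The second step descends from $R\otimes k[\Z^{n-k}]$ to $R$. Each rank one reflexive $R\otimes k[\Z^{n-k}]$-module is of the form $N\otimes k[\Z^{n-k}]$ with $N$ a rank one reflexive $R$-module, because the class groups agree and the torus factor contributes nothing. Hence $M_{\chi^m}\cong N\otimes_k k[\Z^{n-k}]$ with $N=\bigoplus N_i$, and
\[
\Lambda'_{\chi^m}\cong \End_R(N)\otimes_k k[\Z^{n-k}].
\]
Set $\Lambda=\End_R(N)$. Then $\Lambda$ is MCM over $R$: depth over $R\otimes k[\Z^{n-k}]$ at maximal ideals lying over those of $R$ is $\dim R+(n-k)$, and this drops by exactly $n-k$ on taking the fibre over a closed point of the torus. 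Alternatively, $\Lambda$ is a graded direct summand of $\Lambda'_{\chi^m}$ in degree $0$ of the $\Z^{n-k}$-grading, and local cohomology commutes with this. For finite global dimension, $\Lambda\to\Lambda\otimes k[\Z^{n-k}]$ is faithfully flat and the target has finite global dimension; equivalently, set the torus variables to $1$, which is a regular sequence of central elements, so $\gldim\Lambda\le\gldim\Lambda'_{\chi^m}<\infty$.

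I expect the main obstacle to be this descent step, specifically the finite global dimension. The cleanest route is probably the $\Z^{n-k}$-grading argument: a finitely generated $\Lambda$-module $X$ gives $X\otimes k[\Z^{n-k}]$, which has a finite projective resolution over $\Lambda'_{\chi^m}$. Taking the degree-$0$ part of a graded projective resolution (graded projectives being sums of shifts of $\Lambda\otimes k[\Z^{n-k}]$) then yields a finite projective resolution of $X$ over $\Lambda$. A secondary point to check carefully is that localization preserves being a sum of rank one reflexives with no hidden non-toric summands. This follows from the explicit monomial description of the divisorial ideals.
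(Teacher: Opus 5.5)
Your proposal is correct and follows essentially the same route as the paper. You localize $R'$ at the monomial $\chi^m$ cutting out $\cone(F\times\{1\})$, note that localization preserves toric NCCRs, identify $R'_{\chi^m}\cong R\otimes_k k[\Z^{n-k}]$, and descend using that rank-one reflexives over this Laurent extension are extended from $R$; your use of $\operatorname{Cl}$ (rather than the paper's $\operatorname{Pic}$, which is trivial for affine toric varieties and does not classify rank-one reflexives) and your grading or faithful-flatness argument for finite global dimension are in fact more careful than the paper's justifications, but drop the ``regular sequence of central elements'' aside, since quotienting by a central regular element need not preserve finite global dimension (e.g.\ $k[x,y]\to k[x,y]/(x^2-y^3)$).
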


With this theorem, we develop a unified approach to providing toric NCCRs for Gorenstein toric algebras, by expressing a Gorenstein cone $\sigma$ as a facet of another Gorenstein cone $\sigma'$ and descending the corresponding NCCR. 
In this way, we obtain a new, short proof of the known result that all Gorenstein simplicial toric algebras admit an NCCR.

 \begin{corollary*}(Corollary \ref{Cor:SimplicialHasNCCR})
    Let $\sigma\subset N_{\R}$ be a simplicial Gorenstein cone. Then $R=k[\sigma^\vee\cap M]$ has an NCCR.
\end{corollary*}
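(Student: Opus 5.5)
The corollary will follow from the main theorem (Theorem \ref{Thm:FaceOfReflexiveTORICNCCR}) once we realise $\sigma$ as the cone over a face of a polytope whose cone is known to carry a toric NCCR. Since $\sigma$ is Gorenstein, after a change of basis we write $\sigma=\cone(Q\times\{1\})$. Simpliciality of $\sigma$ means that $Q$ is a lattice simplex with vertices $v_0,\dots,v_d$, where $d=\dim\sigma-1$. Note that $Q$ need not be unimodular: the lattice generated by the $(v_i,1)$ may be a proper sublattice of $N$. This is exactly where the difficulty lies.

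\textbf{Step 1: embed $Q$ as a face of a nicer polytope.} I would build a lattice polytope $P$ having a face lattice-equivalent to $Q$, chosen so that $\cone(P\times\{1\})$ is a cone whose toric algebra already has a toric NCCR by known results. The most natural target is a cone that is a quotient singularity by a finite abelian group, or even a smooth/unimodular situation. Concretely, let $G=N/N_0$, where $N_0$ is the sublattice spanned by the $(v_i,1)$. Then $R=k[\sigma^\vee\cap M]$ is the invariant ring $k[x_0,\dots,x_d]^G$ for the diagonal action of the finite abelian group $G^\vee$. The Gorenstein condition says that $G$ acts through $SL$.

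\textbf{Step 2: produce the NCCR directly, or via the theorem.} For a finite abelian $G\subset SL_{d+1}$ acting diagonally and containing no pseudo-reflections (this holds since the rays are primitive), the classical construction gives the NCCR
\[
\Lambda=\End_R\Big(\bigoplus_{\chi\in \widehat G} (k[x]\otimes\chi)^G\Big)\cong k[x]\rtimes G .
\]
Each summand is a rank-one reflexive module, so $\Lambda$ is a toric NCCR. Its global dimension is $d+1$ because the skew group ring has that global dimension. It is maximal Cohen--Macaulay over $R$ because $k[x]$ is a Cohen--Macaulay $R$-module. In the spirit of the paper, I would phrase this through Theorem \ref{Thm:FaceOfReflexiveTORICNCCR}. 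Take $P$ to be a larger simplex containing $Q$ as a face, for instance by adjoining extra vertices to make a higher-dimensional simplex whose cone is still an $SL$-quotient. Simpler still, take $P=Q$ as a face of itself. Either way the descent theorem transfers the NCCR to $R$.

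\textbf{Main obstacle.} The delicate point is verifying the conditions of Auslander's theorem: that the conic modules $(k[x]\otimes\chi)^G$ are precisely the rank-one reflexive modules indexed by $\widehat G\cong \mathrm{Cl}(R)$, and that $\End_R$ of their sum equals $k[x]\rtimes G$. This requires $G$ to contain no pseudo-reflections, which follows from the primitivity of the ray generators. So the step I expect to be hardest is identifying $\mathrm{Cl}(R)$ with $\widehat G$ and checking that all rank-one reflexives appear. If one insists on using only the paper's theorem, the real work shifts to choosing $P$ so that its NCCR is already known, presumably from a unimodular or otherwise earlier-established case.
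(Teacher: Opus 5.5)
Your Step 2 is a complete and correct proof on its own, but it takes a different route from the paper.

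You identify $R$ with $k[x_0,\dots,x_d]^G$ for $G=N/N_0$ acting diagonally. The action lies in $SL_{d+1}$ because $x_0\cdots x_d$ is invariant (this is the Gorenstein condition). It has no pseudo-reflections, since such an element would be a class $a u_j$ with $a\notin\Z$, contradicting primitivity of $u_j$. Auslander's theorem then gives $\End_R(k[x])\cong k[x]\rtimes G$. This has global dimension $d+1$ and is isomorphic to $k[x]^{\oplus|G|}$ as an $R$-module, hence Cohen--Macaulay. Since $G$ is abelian, $k[x]=\bigoplus_{\chi\in\widehat G}k[x]_\chi$ is a sum of rank-one reflexives, so this NCCR is even toric.

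The paper instead realises the simplex as a face of a simplicial reflexive polytope via the wedge construction of \cite{HM06}. It takes a toric NCCR for the bigger cone from \cite[Theorem 5.1]{MS25}, which comes from a tilting bundle on a toric DM stack satisfying the vanishing condition of Theorem \ref{Thm:NovaDMstack}. It then descends this NCCR by Theorem \ref{Thm:FaceOfReflexiveTORICNCCR}.

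Your argument is classical, self-contained and explicit about the module. However, it depends on a simplicial cone being an abelian quotient of affine space, which fails for almost simplicial cones. The paper's argument is much heavier for this corollary, but it is the same pipeline that then yields Theorem \ref{Thm:AlmSim}, which is its purpose.

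A few minor remarks:
\begin{enumerate}
\item Drop the paragraph invoking the descent theorem. With $P=Q$ it is vacuous, and for a larger simplex you would need Step 2 upstairs anyway.
\item The ``main obstacle'' you name is unnecessary. Auslander's theorem needs only the absence of pseudo-reflections, not that the $k[x]_\chi$ exhaust $\operatorname{Cl}(R)$. That is true anyway, since $\operatorname{Cl}(R)\cong\Hom(N_0,\Z)/M\cong\widehat G$.
\item Deduce the Cohen--Macaulay property from $k[x]\rtimes G\cong k[x]^{\oplus|G|}$, not from $k[x]$ being Cohen--Macaulay alone. $\End_R$ of a Cohen--Macaulay module need not be Cohen--Macaulay.
\end{enumerate}
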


Furthermore, we can use our approach prove the existence of toric NCCRs for the case of \newterm{almost simplicial} Gorenstein cones (i.e. Gorenstein cones $\sigma$ with $\dim \sigma+1$ extremal rays), remarking that this result has recently been proven by Tomonaga \cite{Tomo25} using different, more algebraic methods.

\begin{theorem*}(Theorem \ref{Thm:AlmSim})
    Let $\sigma\subset N_\R$ be an almost simplicial Gorenstein cone, i.e. $\sigma=\cone(P\times\{1\})$ with $P$ a lattice polytope with $\dim P+2$ vertices. Then $R=k[\sigma^\vee\cap M]$ has a toric NCCR.
\end{theorem*}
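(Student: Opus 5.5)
The plan is to realise $P$ as a face of a larger lattice polytope $P'$ whose Gorenstein cone $\cone(P'\times\{1\})$ is known to admit a toric NCCR. We then descend that NCCR to $R$ using Theorem \ref{Thm:FaceOfReflexiveTORICNCCR}.

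\emph{Structure of $P$.} A polytope with $\dim P+2$ vertices is a \emph{circuit polytope}. There is a unique (up to scaling) affine dependence
\[
\sum_{i\in I}\lambda_i v_i=\sum_{j\in J}\mu_j v_j,\qquad \sum_i\lambda_i=\sum_j\mu_j,\quad \lambda_i,\mu_j>0,
\]
with $I,J$ disjoint subsets of the vertex set. Vertices outside $I\cup J$ make $P$ an iterated pyramid over the circuit spanned by $I\cup J$. Equivalently, $\sigma$ has $d+1$ rays $u_0,\dots,u_d$ with a single linear relation $\sum a_i u_i=0$ in $N_\R$.

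\emph{First attempt: embed $P$ as a facet of a simplex.} Lift the relation to a simplicial Gorenstein cone. Concretely, embed $P$ as a facet of a lattice simplex $P'$ of dimension $\dim P+1$. To do this, choose a lattice point $w$ at height $1$ over $\operatorname{aff}(P)$ and consider $\conv(P\cup\{w\})$. This is a pyramid, hence has $\dim P+3$ vertices and is not a simplex, so it does not work directly.

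\emph{Better: realise $P$ as a face of a simplex of large dimension.} Every polytope with $m$ vertices is a projection of the standard $(m-1)$-simplex, but we need $P$ to be a \emph{face} of a simplex, which fails since faces of simplices are simplices. So simplices alone cannot work, and we need a different source of NCCRs.

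\emph{Use a polytope whose NCCR we can write down.} I would try to realise $P$ as a face of a product $\Delta_a\times\Delta_b$ (or an iterated pyramid over it), or of a Cayley polytope. The target should be a polytope whose toric ring is a Segre-type or determinantal ring, which has known toric NCCRs from conic modules. For a circuit with $|I|=p$, $|J|=q$, the primitive case $\lambda_i=\mu_j=1$ is exactly lattice equivalent to a face of $\Delta_{p-1}\times\Delta_{q-1}$ (the circuit $e_i\times f_j$ configuration). The general weighted case then requires a lattice transformation and possibly a sublattice.

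To reduce the weighted case, I would pass to a larger polytope. Replace each vertex $v_i$ by a chain of $\lambda_i$ copies, so that $P$ becomes lattice equivalent to a face of a polytope of the form $\Delta_{a}\times\Delta_{b}$ after an appropriate unimodular change of coordinates. Here $a=\sum\lambda_i-1$ and $b=\sum\mu_j-1$. Pyramid directions are handled by taking a further product or join with a simplex. The toric ring of $\cone((\Delta_a\times\Delta_b)\times\{1\})$ is the Segre product of polynomial rings. It has a toric NCCR given by the endomorphism ring of $\bigoplus$ of the rank-one modules in a window of $\min(a,b)+1$ consecutive classes; this is the classical NCCR of the generic determinantal variety (Van den Bergh / Buchweitz–Leuschke–Van den Bergh). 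Theorem \ref{Thm:FaceOfReflexiveTORICNCCR} then gives the toric NCCR of $R$.

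\emph{Main obstacle.} The delicate step is exhibiting the lattice equivalence of a weighted circuit polytope with a genuine face of a product of simplices or a related polytope with a known toric NCCR. The issue is that the lattice generated by the vertices of $P$ may be a proper sublattice, so faces of $\Delta_a\times\Delta_b$ (which are again products of simplices, i.e. unimodular) cannot carry the torsion. If that fails, I would instead pass to the simplicial case: write $\sigma$ as a facet of a simplicial cone $\sigma'$ of dimension $\dim\sigma+1$ with $\dim\sigma+1$ rays. That is, find $P'$ a simplex of dimension $\dim P+1$ containing $P$ as a \emph{facet}, with the extra lattice direction absorbing the circuit relation. Then apply Corollary \ref{Cor:SimplicialHasNCCR} together with Theorem \ref{Thm:FaceOfReflexiveTORICNCCR}. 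Checking which of these embeddings is actually available is the main point I would need to verify.
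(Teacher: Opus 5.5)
There is a genuine gap: you never produce an ambient polytope that works, and both embeddings you propose are impossible.

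Faces of $\Delta_a\times\Delta_b$ are products $\Delta_{a'}\times\Delta_{b'}$. Such a face has $(a'+1)(b'+1)$ vertices in dimension $a'+b'$, and this is dimension plus two only when $a'b'=1$. Further products with simplices give nothing new, and joins only add pyramid directions. So the only almost simplicial polytopes you can reach this way are iterated pyramids over the unit square, whose toric algebras are just the conifold tensored with a polynomial ring.

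In particular, the unit-weight circuit with $|I|=|J|=p\ge 3$ is not a face of $\Delta_{p-1}\times\Delta_{p-1}$: the smallest face containing it is the whole product, with $p^2$ vertices. A weighted circuit such as the trapezoid $\conv((0,0),(2,0),(0,1),(1,1))$ is not even lattice equivalent to the unit square (normalised area $3$ versus $2$), so no ``chain of copies'' makes it a face. Your fallback of realising $\sigma$ as a facet of a simplicial cone contradicts your own earlier remark: faces of simplicial cones are simplicial, while $\sigma$ has $\dim\sigma+1$ rays.

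There is also a duality slip. With the convention $R=k[\sigma^\vee\cap M]$, the ring of $\cone((\Delta_a\times\Delta_b)\times\{1\})$ is the hypersurface $u_0\cdots u_a=w_0\cdots w_b$. The Segre product is the ring of the dual cone and is Gorenstein only for $a=b$, so the determinantal NCCR is not the relevant input.

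The missing idea is to construct the NCCR upstairs by tilting, rather than look one up in the literature. A pyramid over $P$, which you discarded, is exactly what the paper uses. Place $P$ at height $-1$ and add the apex $v_{n+3}=-e_1+k_0e_{n+1}$, with $k_0$ chosen so that $0\in\operatorname{Int}(Q)$. This $Q$ is still almost simplicial, but its face fan is complete. Subdividing it by the cone $\cone(v_3,\dots,v_{n+2})$, coming from the circuit triangulation of $P$, gives a complete simplicial fan $\Sigma$ whose only primitive collections are $\{\rho_1,\rho_2\}$ and its complement.

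A Borisov--Hua parallelogram window in $\operatorname{Pic}_\R(\mathcal{X}_\Sigma)$ then gives a full strong exceptional collection of line bundles. Its sum $\mathcal{T}$ satisfies $H^i(\mathcal{T}\otimes\mathcal{T}^\vee\otimes\O(l\sum_\rho D_\rho))=0$ for $i,l>0$. This holds because twisting by the anticanonical class moves parallel to the walls of $F_{I_\pm}$ and away from $F_\emptyset$. Theorem \ref{Thm:NovaDMstack} then yields a toric NCCR for $\cone(Q\times\{1\})$, which Theorem \ref{Thm:FaceOfReflexiveTORICNCCR} descends to $R$.
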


\noindent Our proof works by constructing, given a lattice polytope $P$, another polytope $Q$ in one dimension higher such that a simplicial subdivision of the associated face fan yields a smooth toric DM stack with a tilting bundle (obtained by applying methods of Borisov-Hua \cite{BH09}). This tilting bundle further fulfills some cohomological vanishing conditions which, by results from our previous work \cite{MS25}, guarantee the existence of a toric NCCR for $k[\cone(Q\times\{1\})^\vee\cap (M\oplus \Z)]$, descending to give an NCCR of $R$.

\subsection{Structure and Notation}

We give a brief overview of toric Deligne-Mumford stacks and non-commutative crepant resolutions in \S\ref{Sec:Background}. Then, in \S\ref{Sec:Descent}, we introduce and prove the main technical result of this paper in Theorem \ref{Thm:FaceOfReflexiveTORICNCCR}. The theorem states that a toric NCCR for an affine toric Gorenstein algebra associated to a cone $\sigma$ descends to a toric NCCR for cones lattice equivalent to its faces $\tau\prec \sigma$. We apply this theorem in \S\ref{sec:Applications} to give altenative proofs for the existence of NCCRs for toric algebras associated to simplicial and almost simplicial Gorenstein cones.

 Let us here fix some notation used throughout this paper. Given a normal noetherian domain $R$, we say that a finitely generated $R$-module is \newterm{reflexive} if the canonical map $M\mapsto \Hom_R(\Hom_R(M,R),R)$ is an isomorphism. Such modules form a category, which we denote by $\operatorname{ref}R$.  For a reflexive $R$-algebra $\Lambda$, denote by $\operatorname{ref}(\Lambda)$ the category of $\Lambda$-modules which are reflexive as $R$-modules. Under the additional assumption that $R$ is commutative, an $R$-module $M$ is said to be \newterm{maximal Cohen-Macaulay} if $M_m$ is maximal Cohen-Macaulay for every maximal ideal $m$. We denote the category of such modules by $\operatorname{CM}R$ and will omit the word maximal in this paper. As detailed in \cite{IR08, IW14b} $M\in \operatorname{CM}R\Leftrightarrow \Ext^i(M,R)=0$ for all $i>0$.
 Given a toric variety $X$, we usually denote by $M$ its character lattice and by $N$ the cocharacter lattice, dual to $M$.The pairing $M\times N\rightarrow \Z$ naturally extends to a pairing of $M_\R:=M\otimes_\Z \R$ and $N_\R:=N\otimes_\Z\R$.  As such, the fan $\Sigma$ of $X$ lives inside $N_\R$. For a fan $\Sigma$ with ray $\rho\in \Sigma(1)$, we denote by $D_\rho$ the torus invariant Weil divisor associated to $\rho$. 
Finally, we fix an algebraically closed field $k$ of characteristic 0. 

\subsection{Acknowledgements}
The authors would like to thank Will Donovan for fruitful conversations. The first author is supported by Bejing National Science Foundation International Scientist Program IS25013 and the Beijing Postdoctoral Research Foundation.
The second author would like to acknowledge grants from Beijing Institute of Mathematical Sciences and Applications (BIMSA), the Beijing NSF BJNSF-IS24005, the China National Science Foundation (NSFC) NSFC-RFIS program W2432008. He would like to thank NSF AI Institute for Artificial Intelligence and Fundamental Interactions at Massachusetts Iinstitute of Technology (MIT) which is funded by the US NSF grant under Cooperative Agreement PHY-2019786. He would also like to thank China's National Program of Overseas High Level Talent for generous support.

\section{Toric geometry and non-commutative resolutions}\label{Sec:Background}
In this section, we review some notions and facts from toric geometry and define non-commutative resolutions.

\subsection{Cox stacks and cohomology computations}
Recall that a \newterm{Gorenstein} cone is a strongly convex rational polyhedral cone $\sigma\subset N_\R$ with primitive ray generators $u_\rho\in N$ for $\rho\in \sigma(1)$ such that there is an element $m_\sigma\in M$ with $\langle m_\sigma, u_\rho\rangle=1$ for all $\rho\in \sigma(1)$. 
Fix now a fan $\Sigma\subset N_\R$ where $\dim N=n$. To the fan $\Sigma$ we can not only associate a toric variety, but also a quotient stack $\mathcal{X}_\Sigma$, using a standard construction by Cox. Let $\nu$ be the set of primitive ray generators of $\Sigma(1)$, i.e. $\nu:=\{u_\rho\mid \rho\in \Sigma(1)\}$. Consider the vector space $\R^{|\nu|}$ with elementary $\Z$-basis $e_\rho$, indexed by the rays $\rho\in \Sigma(1)$. The \newterm{Cox fan} of $\Sigma$ is now defined to be \[
\operatorname{Cox}(\Sigma):=\{\cone(e_\rho\mid \rho\in \sigma)\vert\sigma\in \Sigma\}.
\]
This fan is a subfan of the standard fan for $\A^{|\nu|}$, and thus we can view the associated toric variety as an open subset of $\A^{|\nu|}$, denoted by $U_\Sigma:=X_{\operatorname{Cox}(\Sigma)}$. There is a well-known right-exact sequence

\begin{equation}\label{eqn:Coxseqpre}
 M\xrightarrow{f_\Sigma}\Z^{|\nu|}\xrightarrow{\pi}\operatorname{coker}(f_\Sigma)\rightarrow 0,
\end{equation} 
where $m\mapsto \sum_{\rho\in\Sigma(1)}\langle u_\rho,m\rangle e_\rho\nonumber$. Applying to this sequence the functor $\Hom(-,\mathbb{G}_m)$ yields
\[
1\rightarrow \Hom(\operatorname{coker}(f_\Sigma),\mathbb{G}_m)\xrightarrow{\hat{\pi}}\mathbb{G}_m^{|\nu|}\rightarrow\mathbb{G}_m^n.
\]

From this sequence we extract the group $S_\Sigma:=\Hom(\operatorname{coker}(f_\Sigma),\mathbb{G}_m)$, which acts on $U_\Sigma$ and allows us to define the \newterm{Cox stack} as the following quotient stack.
\begin{definition}
    \label{Def:Coxstack}
    Define the \newterm{Cox stack} associated to $\Sigma$ to be \[
    \mathcal{X}_\Sigma:=[U_\Sigma/S_\Sigma]
    \]
\end{definition}

Considering the Cox stack associated to a fan has an advantage over simply considering the associated toric variety in that the stack can be smooth even when the variety is not. A toric variety is smooth if and only if the corresponding fan is a smooth fan, whereas a Cox stack merely requires the fan to be simplicial (which happens for orbifold singularities in the toric variety).
\begin{theorem}[Theorem 4.12 in \cite{FK18}]
    \label{Thm:4.12FK18}
    If $\Sigma$ is simplicial, then $\mathcal{X}_\Sigma$ is a smooth Deligne-Mumford stack with coarse moduli space $X_\Sigma$. When $\Sigma$ is smooth (equivalently, the variety $X_\Sigma$ is smooth), $\mathcal{X}_\Sigma\cong X_\Sigma$.
\end{theorem}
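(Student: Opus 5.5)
The plan is to work locally on the Cox cover. Since $\Sigma$ is simplicial, for each maximal cone $\sigma\in\Sigma$ consider the affine open $U_\sigma\subset U_\Sigma$ on which $x_\rho\neq 0$ for every $\rho\notin\sigma(1)$. We will show three things: $\mathcal{X}_\Sigma$ is covered by the open substacks $[U_\sigma/S_\Sigma]$; each of these is a quotient of a smooth affine variety by a finite abelian group; and their coarse spaces glue to $X_\Sigma$.

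\textbf{Local structure.} Write $U_\sigma\cong \A^{\sigma(1)}\times\mathbb{G}_m^{\Sigma(1)\setminus\sigma(1)}$, which is smooth. Consider the torus $\mathbb{G}_m^{\Sigma(1)\setminus\sigma(1)}\subset \mathbb{G}_m^{|\nu|}$. Because $\sigma$ is simplicial of full dimension $n$, the vectors $u_\rho$ for $\rho\in\sigma(1)$ form a $\Q$-basis of $N_\Q$. Dualizing \eqref{eqn:Coxseqpre}, this implies that $S_\Sigma$ acts on the factor $\mathbb{G}_m^{\Sigma(1)\setminus\sigma(1)}$ transitively with finite stabilizer. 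Concretely, the composite $S_\Sigma\to\mathbb{G}_m^{|\nu|}\to\mathbb{G}_m^{\Sigma(1)\setminus\sigma(1)}$ is surjective, and its kernel $G_\sigma$ is the finite group $\Hom(N/N_\sigma,\mathbb{G}_m)$, where $N_\sigma$ is the sublattice spanned by the $u_\rho$ for $\rho\in\sigma(1)$. Fixing the point $(1,\dots,1)$ in the torus factor then gives an isomorphism
\[
[U_\sigma/S_\Sigma]\cong[\A^{n}/G_\sigma].
\]
If some cones are not full-dimensional, the same argument works after splitting off a torus factor.

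\textbf{Globalize.} A quotient of a smooth variety by a finite group in characteristic $0$ is a smooth Deligne--Mumford stack, since stabilizers are finite and reduced. Smoothness and the DM property are local, so $\mathcal{X}_\Sigma$ is smooth DM. For the coarse space, the GIT-type map $U_\sigma\to U_\sigma/\!\!/S_\Sigma=\spec k[x]^{S_\Sigma}$ identifies, via \eqref{eqn:Coxseqpre}, $k[U_\sigma]^{S_\Sigma}$ with $k[\sigma^\vee\cap M]$. This is Cox's theorem for the affine piece, and the maps glue compatibly along faces. Hence the coarse space is $X_\Sigma$.

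\textbf{Smooth case.} If $\Sigma$ is smooth, the $u_\rho$ for $\rho\in\sigma(1)$ extend to a $\Z$-basis, so every $G_\sigma$ is trivial. The map $\mathcal{X}_\Sigma\to X_\Sigma$ is then locally an isomorphism $\A^n\cong U_\sigma$, and therefore an isomorphism.

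\textbf{Main obstacle.} The step I expect to be hardest is the identification of stabilizers: showing that $G_\sigma$ is finite, with $S_\Sigma$ acting freely modulo $G_\sigma$. This requires dualizing \eqref{eqn:Coxseqpre} carefully when $f_\Sigma$ has torsion cokernel, and handling the case where $\Sigma$ has no full-dimensional cones, where a torus factor must be split off.
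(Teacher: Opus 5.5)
The paper does not prove this statement; it is quoted from \cite{FK18}, so there is no in-paper argument to compare against. Your chart-by-chart proof is the standard one and is correct. The opens $\{x_\rho\neq 0 \text{ for } \rho\notin\sigma(1)\}$ are $S_\Sigma$-stable and cover $U_\Sigma$. For a full-dimensional simplicial $\sigma$, the map $M\to\Z^{\sigma(1)}$ is injective with finite cokernel, so $S_\Sigma\to\mathbb{G}_m^{\Sigma(1)\setminus\sigma(1)}$ is surjective with finite kernel $G_\sigma$; surjectivity uses that $k^\times$ is divisible. The induction isomorphism then gives $[U_\sigma/S_\Sigma]\cong[\A^{\sigma(1)}/G_\sigma]$. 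From this, smoothness, the DM property, the coarse space $\A^{\sigma(1)}/G_\sigma=\spec k[\sigma^\vee\cap M]$ and the smooth case all follow. One small inaccuracy: $G_\sigma$ is canonically $\Hom(\operatorname{coker}(M\to\Z^{\sigma(1)}),\mathbb{G}_m)$, with $\operatorname{coker}(M\to\Z^{\sigma(1)})\cong\Ext^1(N/N_\sigma,\Z)$. This agrees with your $\Hom(N/N_\sigma,\mathbb{G}_m)$ only up to non-canonical isomorphism. Since you only use that it is finite of order $[N:N_\sigma]$, nothing breaks.

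What needs adjusting is your torus-factor remark and the ``main obstacle''. Your coarse-space step really uses that $f_\Sigma$ is injective, i.e.\ that the $u_\rho$ span $N_\R$, not that some cone is full-dimensional.

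Suppose the rays span but a maximal cone $\sigma$ is not full-dimensional. Then $S_\Sigma\to\mathbb{G}_m^{\Sigma(1)\setminus\sigma(1)}$ is no longer surjective, but its kernel is still finite. So every $S_\Sigma$-orbit in the chart has dimension $\dim S_\Sigma$ and is therefore closed. Hence $U_\sigma\to\spec k[U_\sigma]^{S_\Sigma}=\spec k[\sigma^\vee\cap M]$ is a geometric quotient, and your conclusions go through without any explicit splitting.

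If the rays do not span $N_\R$, on the other hand, the torus factor of $X_\Sigma$ is invisible to $\mathcal{X}_\Sigma$. The invariant ring is the semigroup algebra of the image of $\sigma^\vee\cap M$ in $M/(M\cap\{u_\rho\}^\perp)$. For instance, $\Sigma=\{0\}$ gives $\mathcal{X}_\Sigma=\spec k$ while $X_\Sigma=T_N$. So the coarse-moduli assertion needs the hypothesis that $\Sigma(1)$ spans $N_\R$; this is automatic for the complete fans used in the rest of the paper. You should state that hypothesis rather than promise to split off a torus factor.
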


Let us now recall a way to compute the cohomology of line bundles on such stacks. These computations are standard, and our recollection here is based on work by Borisov-Hua \cite{BH09} and Efimov \cite{Efimov14}; the results below can be found within these two references. From hereon, we fix $\Sigma\subset N_\R$ to be a complete simplicial fan, keeping $\dim N=n$. For any $I\subseteq \Sigma(1)$, we denote by $C_I$ the simplicial complex with vertex set $I$, consisting of those subsets $J\subseteq I$ forming the set of rays of some cone in $\Sigma$. For example, $|C_\emptyset|=\emptyset$ and $|C_{\Sigma(1)}|=S^{n-1}$. Given a vector $\mathbf{r}=\sum r_\rho e_\rho\in \Z^{|\Sigma(1)|}\subset \R^{|\Sigma(1)|}$, we denote by $\operatorname{Supp}(\mathbf{r})$ the set of rays $\rho\in \Sigma(1)$ such that $r_\rho<0$. Using a standard computation via toric \v{C}ech complexes, one obtains the following result on the cohomology of line bundles on $\mathcal{X}_\Sigma$.

\begin{proposition}
    \label{Prop:CohoCech}
    Let $\mathcal{L}$ be a line bundle on $\mathcal{X}_\Sigma$. Then \[
    H^i(\mathcal{L})=\bigoplus_{\substack{\mathbf{r}\in \Z^{|\Sigma(1)|},\\ \O(\sum_{\rho\in \Sigma(1)}r_\rho D_\rho)\cong \mathcal{L}}}\tilde{H}_{i-1}(|C_{\operatorname{Supp}(\mathbf{r})}|).
    \]
\end{proposition}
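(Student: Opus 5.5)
We want $H^i(\mathcal{L})$ for a line bundle $\mathcal{L}$ on the smooth Deligne–Mumford stack $\mathcal{X}_\Sigma=[U_\Sigma/S_\Sigma]$. The plan is to identify this with the $S_\Sigma$-invariant part of the local cohomology of $\O_{U_\Sigma}$, and then compute that torus-graded piece by piece with a Čech complex.

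First I reduce to equivariant cohomology upstairs. Since $S_\Sigma$ is diagonalizable, hence linearly reductive in characteristic 0, taking invariants is exact. So for a line bundle $\mathcal{L}$ corresponding to a character $\chi\in\operatorname{coker}(f_\Sigma)=\widehat{S_\Sigma}$ we have
\[
H^i(\mathcal{X}_\Sigma,\mathcal{L})=H^i(U_\Sigma,\O_{U_\Sigma})_\chi,
\]
the $\chi$-isotypic part. Every line bundle on $\mathcal{X}_\Sigma$ is of this form, namely $\O(\sum r_\rho D_\rho)$ with $\pi(\mathbf{r})=\chi$. This holds because $\operatorname{Pic}[U_\Sigma/S_\Sigma]$ equals the character group, as $U_\Sigma$ is the complement of a codimension $\ge 2$ subset of affine space. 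The $\mathbb{G}_m^{|\nu|}$-weights on functions are the Laurent monomials $x^{\mathbf{r}}$, and $x^{\mathbf{r}}$ has $S_\Sigma$-weight $\pi(\mathbf{r})$. So the $\chi$-part is the direct sum over $\mathbf{r}$ with $\pi(\mathbf{r})=\chi$, i.e.\ over $\mathbf{r}$ with $\O(\sum r_\rho D_\rho)\cong\mathcal{L}$, of the $\mathbf{r}$-graded pieces $H^i(U_\Sigma,\O)_{\mathbf{r}}$.

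Next I compute the graded pieces. Cover $U_\Sigma$ by the affine opens $U_\sigma=\{x_\rho\neq 0 \text{ for } \rho\notin\sigma\}$, $\sigma\in\Sigma$; these are intersection-closed, with $U_\sigma\cap U_\tau=U_{\sigma\cap\tau}$. Since $U_\Sigma$ is separated, Čech cohomology computes the sheaf cohomology. The monomial $x^{\mathbf{r}}$ is a regular function on $U_\sigma=\spec k[x_\rho^{\pm1}\ (\rho\notin\sigma),\ x_\rho\ (\rho\in\sigma)]$ exactly when $\operatorname{Supp}(\mathbf{r})\cap\sigma(1)=\emptyset$. It is cleaner to work with local cohomology instead. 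Let $Z=\A^{|\nu|}\setminus U_\Sigma$. Then
\[
H^i(U_\Sigma,\O)\cong H^{i+1}_Z(\A^{|\nu|},\O)\quad\text{for } i\ge1,
\]
and for $i=0$ there is the usual exact sequence with $k[x]$. Alternatively, I use the Čech complex directly on the maximal cones. In degree $\mathbf{r}$, write $\mathbf{r}=(\mathbf{r}^-,\mathbf{r}^+)$ according to negative and nonnegative entries, and set $I=\operatorname{Supp}(\mathbf{r})$. The $\mathbf{r}$-part of $\O(U_\sigma)$ is $k$ if $\sigma(1)\cap I=\emptyset$, and $0$ otherwise.

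So the $\mathbf{r}$-graded Čech complex is the cochain complex of the nerve restricted to cones avoiding $I$. I would rather organize it as follows, which is the standard Hochster-type computation. The cones avoiding $I$ form the subcomplex on the vertex set $\Sigma(1)\setminus I$. Since $|C_{\Sigma(1)}|=S^{n-1}$, Alexander duality relates its cohomology to the homology of the full subcomplex $C_I$ on $I$. Concretely, the Čech complex in degree $\mathbf{r}$ computes the relative cohomology $H^{*}(\Delta,\Delta_{\not\ni I})$ up to shift, and this becomes $\tilde{H}_{i-1}(|C_I|)$. Here I use the standard fact that for a simplicial sphere $S$ and a vertex subset $I$, the complement of the full subcomplex on $I$ deformation retracts onto the full subcomplex on the other vertices. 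The sanity checks match: $I=\emptyset$ gives $\tilde H_{-1}(\emptyset)=k$ in degree $i=0$, the constant-type sections; and $I=\Sigma(1)$ gives $\tilde H_{n-1}(S^{n-1})$ in degree $n$, i.e.\ top cohomology.

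The main obstacle is keeping the indices and the Alexander-duality shift correct while passing from the Čech nerve to $\tilde H_{i-1}(|C_I|)$. To handle it, I would express $H^i$ via local cohomology $H^{i+1}_Z$ and the Hochster/Reisner formula for the Stanley–Reisner ring of $C_{\Sigma(1)}$. This formula gives $H^{j}_{\mathfrak{m}\text{-type}}$ in degree $\mathbf{r}$ as $\tilde H_{j-2}(|C_{\operatorname{Supp}\mathbf r}|)$-type terms. Setting $j=i+1$ then yields the claimed $\tilde H_{i-1}$.
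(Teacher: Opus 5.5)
There is a genuine gap at the central step, which you flag yourself and then do not resolve.

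Your set-up is correct and matches the standard toric \v{C}ech computation the paper takes from Borisov--Hua and Efimov. $H^i(\mathcal{X}_\Sigma,\mathcal{L})$ is the sum of the $\mathbf{r}$-graded pieces of $H^i(U_\Sigma,\mathcal{O})$ over all $\mathbf{r}$ of class $[\mathcal{L}]$, and $\mathcal{O}(U_\tau)_{\mathbf{r}}=k$ exactly when $\tau(1)\cap I=\emptyset$, where $I=\operatorname{Supp}(\mathbf{r})$.

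The gap is the step that carries all the content: identifying the cohomology of the $\mathbf{r}$-graded \v{C}ech complex with $\tilde{H}_{i-1}(|C_I|)$. You only assert this ``up to shift''. Your description of the complex is also off. For the cover by maximal cones, its basis is the collections of maximal cones whose common face avoids $I$. That family is closed under \emph{enlarging} collections, so the complex is a relative cochain complex of a pair, not the cochain complex of $C_{\Sigma(1)\setminus I}$. Reaching $C_{\Sigma(1)\setminus I}$, and then $C_I$ by Alexander duality, would need a dual-block argument using that links of cones in a complete simplicial fan are spheres; you do not supply it.

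Your fallback does not close the gap either. Hochster's formula computes $H^j_{\mathfrak{m}}(k[C_{\Sigma(1)}])$, the local cohomology of the Stanley--Reisner ring at the maximal ideal; it lives in degrees $\mathbf{a}\le 0$ and is expressed through links. What you need is $H^{i+1}_B(k[x_\rho])$ for the irrelevant ideal $B=(x^{\hat\sigma}:\sigma\in\Sigma)$ of the polynomial ring. Here $B$ is the Stanley--Reisner ideal of the \emph{Alexander dual} of $C_{\Sigma(1)}$, a different object, governed by Musta\c{t}\u{a}'s formula for local cohomology at squarefree monomial ideals.

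The missing step is short and needs no duality. Let $\sigma_1,\dots,\sigma_N$ be the maximal cones. The $\mathbf{r}$-graded \v{C}ech complex is exactly $C^\bullet(\Delta^{N-1},K_I;k)$, where $\Delta^{N-1}$ is the full simplex on $\{1,\dots,N\}$ and $K_I$ is the subcomplex of collections whose common face contains a ray of $I$. All restriction maps are identities on $k\cdot x^{\mathbf{r}}$. Since $\Delta^{N-1}$ is contractible, $H^i(U_\Sigma,\mathcal{O})_{\mathbf{r}}\cong\tilde{H}^{i-1}(K_I)$, with $\tilde{H}^{-1}(\emptyset)=k$ handling $I=\emptyset$.

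Now write $K_I=\bigcup_{\rho\in I}\Delta(A_\rho)$ with $A_\rho=\{j:\rho\in\sigma_j\}$. Every intersection of these simplices is a simplex or empty. It is non-empty iff the chosen rays lie in a common maximal cone, iff (by simpliciality) they span a cone of $\Sigma$, i.e.\ iff they form a face of $C_I$. The nerve lemma gives $K_I\simeq|C_I|$, so $H^i(U_\Sigma,\mathcal{O})_{\mathbf{r}}\cong\tilde{H}^{i-1}(|C_I|;k)$. Over the field $k$ this has the same dimension as $\tilde{H}_{i-1}(|C_I|;k)$. Summing over $\mathbf{r}$ with $\mathcal{O}(\sum_\rho r_\rho D_\rho)\cong\mathcal{L}$ gives the proposition.
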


For the purpose of effectively determining the cohomology of line bundles, it is thus required to understand when the reduced homology of the simplicial complexes $C_I$ vanishes. \begin{definition}
    \label{Def:ForbiddenPoint}
    Given a subset $I\subseteq \Sigma(1)$ of non-vanishing homology $\tilde{H}_\bullet(|C_I|)\neq 0$, define the \newterm{forbidden point}
\[
q_I=-\sum_{\rho\not\in I}D_\rho\in \operatorname{Pic}_\R(\mathcal{X}_\Sigma).
\]
Then define the \newterm{forbidden cone} $F_I\subseteq \operatorname{Pic}_\R(\mathcal{X}_\Sigma)$ to be the set\[
F_I=q_I+\sum_{\rho\in I}\R_{\ge0}D_\rho-\sum_{\rho\not\in I}\R_{\ge0}D_\rho.
\]
\end{definition}
\noindent We note the following immediate Corollary of the Proposition \ref{Prop:CohoCech}.
\begin{corollary}
    \label{Cor:AcyclicForbidden}
    Let $\mathcal{L}$ be a line bundle on $\mathcal{X}_\Sigma$. The following are equivalent:
    \begin{enumerate}
        \item $H^{>0}(\mathcal{L})=0$;
        \item $\mathcal{L}$ does not belong to any forbidden cone $F_I$, $I\neq \emptyset$.
    \end{enumerate}
\end{corollary}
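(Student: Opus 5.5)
The plan is to read the corollary off Proposition \ref{Prop:CohoCech} by sorting the summands according to cohomological degree, and then to rewrite the condition "$\mathbf{r}$ has support $J$" as membership in a translated orthant. Membership in $F_I$ is understood as in \cite{BH09}: $\mathcal{L}$ lies in $F_I$ if it has an integral representative $\mathbf{r}\in\Z^{|\Sigma(1)|}$ in that translated orthant. With a purely real representative the integrality step below is not automatic.

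\textbf{Step 1 (which summands contribute in positive degree).} Take $\mathbf{r}$ with $\O(\sum r_\rho D_\rho)\cong\mathcal{L}$ and put $J=\operatorname{Supp}(\mathbf{r})$. If $J=\emptyset$, then $|C_\emptyset|=\emptyset$ has reduced homology only $\tilde H_{-1}=k$, which contributes to $H^0$ and nowhere else. If $J\neq\emptyset$, every single ray spans a cone of $\Sigma$, so $C_J$ is a nonempty complex and $\tilde H_{-1}(|C_J|)=0$. Any nonzero reduced homology of $C_J$ therefore sits in some degree $j\ge 0$ and contributes to $H^{j+1}$ with $j+1\geq 1$. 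Hence $H^{>0}(\mathcal{L})\neq 0$ if and only if some representative $\mathbf{r}$ of $\mathcal{L}$ has $J=\operatorname{Supp}(\mathbf{r})\neq\emptyset$ and $\tilde H_\bullet(|C_J|)\neq 0$.

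\textbf{Step 2 (translate into forbidden cones).} The integral vectors with $\operatorname{Supp}(\mathbf{r})=J$ are exactly those with $r_\rho\le -1$ for $\rho\in J$ and $r_\rho\ge 0$ for $\rho\notin J$. Set $I=\Sigma(1)\setminus J$, the rays carrying nonnegative coefficients. Then these vectors are exactly the integral points of $-\sum_{\rho\notin I}D_\rho+\sum_{\rho\in I}\R_{\ge0}D_\rho-\sum_{\rho\notin I}\R_{\ge0}D_\rho$. Their images in $\operatorname{Pic}$ are therefore the line bundles in $F_I$.

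\textbf{Step 3 (match the homological condition).} Step 1 asks for nonvanishing homology of $C_J$, whereas Definition \ref{Def:ForbiddenPoint} asks it of $C_I$ with $I=J^c$. To compare them, use completeness and simpliciality of $\Sigma$. The complement of $|C_J|$ in $|C_{\Sigma(1)}|=S^{n-1}$ deformation retracts onto $|C_{J^c}|$, by pushing points of each simplex away from the vertices in $J$. Alexander duality then gives $\tilde H_j(|C_J|)\cong\tilde H^{n-2-j}(|C_{J^c}|)$. This should be checked on the extreme cases: $J=\Sigma(1)$ with $J^c=\emptyset$ gives $S^{n-1}$ against $\emptyset$, and both are nonzero. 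So $C_J$ has nonzero reduced homology if and only if $C_I$ does, and $\mathcal{L}$ has higher cohomology if and only if $\mathcal{L}\in F_I$ for some admissible $I$ with $I^c=J\neq\emptyset$.

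\textbf{Main obstacle: the indexing convention.} The excluded case $J=\emptyset$ is the one producing only $H^0$, and it corresponds to $I=\Sigma(1)$. That cone $F_{\Sigma(1)}$ is the effective cone and contains acyclic bundles such as $\O$. So for the literal statement to hold, "$I\neq\emptyset$" must refer to the excluded index in the paper's convention, i.e.\ $J=I^c$ with $q_{I}$ read in the $J$-indexing. In the write-up I would state explicitly that the excluded cone is the one whose translated orthant consists of effective classes. The remaining care is the integrality of representatives, which is handled by the interpretation fixed at the start.
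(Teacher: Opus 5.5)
Your proposal is correct and is the argument the paper has in mind when it calls the corollary an immediate consequence of Proposition \ref{Prop:CohoCech}. Two of your extra steps are genuinely needed. The Alexander-duality step is required because the proposition is indexed by negative supports, while Definition \ref{Def:ForbiddenPoint} imposes the homology condition on their complements. Your integral reading of ``$\mathcal{L}\in F_I$'' is required for (1)$\Rightarrow$(2), which can fail for real images when $\operatorname{Pic}(\mathcal{X}_\Sigma)$ has torsion; the paper only uses (2)$\Rightarrow$(1), which is safe under either reading.

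Your diagnosis of the indexing slip is confirmed by the paper's own usage. In Lemma \ref{Lem:AlmSimPrim} and the proof of Proposition \ref{Prop:ExcColln}, $F_\emptyset$ with $q_\emptyset=-\sum_\rho D_\rho$ (so $f(q_\emptyset)=-1$) is treated as a genuine forbidden cone and only $\Sigma(1)$ is discarded, so in the paper's notation condition (2) should read $I\neq\Sigma(1)$.
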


To classify which sets $I$ give complexes $C_I$ of non-vanishing reduced homology, we consider \newterm{primitive collections}.
\begin{definition}
    \label{Def:PrimColln}
    A non-empty subset $I\subseteq \Sigma(1)$ is called a \newterm{primitive collection} if it is not a set of boundary cones of any cone in $\Sigma$, but each proper subset $J\subseteq I$ is.
\end{definition}

\noindent Efimov \cite{Efimov14} proves the following result.
\begin{lemma}[Lemma 4.4 in \cite{Efimov14}]
    \label{Lem:RedHomUnionofPrim}
    Let $I\subseteq \Sigma(1)$ be a non-empty subset such that $\tilde{H}^{\bullet}(|C_I|)\neq 0$. Then $I$ is a union of primitive collections.
\end{lemma}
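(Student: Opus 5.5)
We need to prove Lemma 4.4 of Efimov: if $I\subseteq\Sigma(1)$ is nonempty and $\tilde H^\bullet(|C_I|)\neq 0$, then $I$ is a union of primitive collections. Equivalently, every $\rho\in I$ lies in some primitive collection $J\subseteq I$.

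We argue by contraposition. Suppose some $\rho\in I$ lies in no primitive collection contained in $I$; we show that $|C_I|$ is a cone with apex $\rho$, hence contractible, so its reduced (co)homology vanishes. The key claim is: for every face $J\in C_I$, the set $J\cup\{\rho\}$ is again a face of $C_I$. Granting this, $C_I$ equals the simplicial cone $\rho * \mathrm{lk}(\rho)$, whose geometric realization deformation retracts onto the vertex $\rho$. This contradicts $\tilde H^\bullet(|C_I|)\neq 0$.

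To prove the claim, suppose $J\in C_I$ but $J\cup\{\rho\}\notin C_I$; necessarily $\rho\notin J$. Since $J\cup\{\rho\}\subseteq I$, failing to be a face of $C_I$ means $J\cup\{\rho\}$ is not the ray set of any cone of $\Sigma$. Choose a subset $K\subseteq J\cup\{\rho\}$ that is minimal among non-faces. Then every proper subset of $K$ is the ray set of a cone of $\Sigma$, using that faces of cones are cones in a fan, so the set of faces is closed under subsets. Thus $K$ is a primitive collection, and $K\subseteq I$.

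It remains to see that $\rho\in K$. Since $K$ is not a face but $J$ is, and faces are closed under subsets, $K\not\subseteq J$. Hence $\rho\in K$, contradicting the choice of $\rho$. This proves the claim.

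The only point needing care is the downward-closedness of $C_I$, which makes it a genuine simplicial complex. This follows because $\Sigma$ is simplicial: any subset of the rays of a simplicial cone spans a face of that cone. Contractibility of a simplicial cone is standard. The nonemptiness hypothesis guarantees that $\rho$ exists; the empty complex is excluded, since its reduced homology is nonzero.
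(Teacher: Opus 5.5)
Your proof is correct. The paper gives no proof of this lemma and only cites Efimov; your argument is the standard proof of the cited result. A ray $\rho\in I$ lying in no primitive collection contained in $I$ forces $J\cup\{\rho\}\in C_I$ for every $J\in C_I$, since a minimal non-face $K\subseteq J\cup\{\rho\}$ would be a primitive collection containing $\rho$. Hence $C_I=\rho * C_{I\setminus\{\rho\}}$ is a cone and $|C_I|$ is contractible.
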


\subsection{Non-commutative crepant resolutions}
Consider now a Gorenstein cone $\sigma$ with associated toric variety $X_\sigma$. It is known that subdividing the cone $\sigma$ gives a crepant toric morphism if and only if the added rays lie in the Gorenstein plane $\{\langle m_\sigma,n\rangle=1\}$. Thus, any simplicial fan $\Sigma$ with support $|\Sigma|=\sigma$ such that for all $\rho\in\Sigma(1)$, the primitive ray generator $u_\rho$ lies in this Gorenstein plane, gives a crepant resolution via the smooth toric DM stack $\mathcal{X}_\Sigma$. These resolutions are of a geometric nature, and it remains a challenge to translate this into the language of \newterm{non-commutative crepant resolutions}. 

\noindent Let $R$ be a normal noetherian domain. \begin{definition}
    \label{Def:NCCR}
    A \newterm{non-commutative resolution} of $R$ is an $R$-algebra $\Lambda=\End_R(M)$ for $M\in \operatorname{ref}R$ such that $\gldim\Lambda<\infty$. Further assuming that $R$ is Gorenstein, we call $\Lambda$ \newterm{crepant} if additionally $\Lambda\in \operatorname{CM}R$. 
\end{definition}
Usually, we will abbreviate non-commutative crepant resolutions as NCCRs. 
Note that the notion of NCCRs does capture in an algebraic manner the geometric concept of crepant resolutions by passing from an affine variety to its associated coordinate ring. Details of this correspondence are detailed in \cite[\S 4]{VdB04}. The definition of NC(C)Rs can be understood for more general schemes by reducing to affine patches.

Whilst the notion of NCCRs encapsulates the nature of crepant resolutions, it is still unclear to what extent they exist. A natural first class of variety to consider is the class of affine Gorenstein toric varieties. Given a cone $\sigma$, we refer to the associated coordinate ring as \newterm{toric algebra}.

\begin{conjecture}
    \label{Conj:affinetoric}
    A Gorenstein toric algebra always has an NCCR.
\end{conjecture}

Most known results in this context give a special type of NCCR, called \newterm{toric NCCR}.
\begin{definition}
    \label{Def:ToricNCCR}
    An NCCR $\Lambda=\End_R(M)$, $M\in \operatorname{ref}R$ is called \newterm{toric} if $M$ is isomorphic to a sum of ideals.
\end{definition}
 
Constructions giving toric NCCRs often exploit links to tilting theory, expressing the NCCR as endormorphism algebra of a \newterm{tilting bundle}.

\begin{definition}
    \label{Def:tilt}
    Let $Y$ be a Noetherian scheme. A \newterm{partial tilting complex} $\mathcal{T}$ on $Y$ is a perfect complex such that $\Ext^i_Y(\mathcal{T},\mathcal{T})=0$ for $i\neq 0$. We say that $\mathcal{T}$ is \newterm{tilting} if it generates the derived category $\operatorname{D}_{Qcoh}(Y)$ in the sense that it has trivial right orthgonal, i.e. $\Rhom_Y(\mathcal{T},\mathcal{F})=0$ implies $\mathcal{F}=0$. If the perfect complex is in fact a bundle, we refer to it as \newterm{(partial) tilting bundle}.
\end{definition}
\noindent This definition naturally generalises to algebraic stacks. 

Consider a noetherian scheme $Y$ with tilting complex $\mathcal{T}$ and let $\Lambda=\End_{Y}(\mathcal{T})$. The functor $\Rhom_{Y}(\mathcal{T},-)$ defines an equivalence between $\operatorname{D}_{Qch}(Y)$ and $\operatorname{D}(\Lambda^\circ)$ \cite[Theorem 1.2]{KK20}. Assuming further that $Y$ is regular and $\Lambda$ is right noetherian, then $\Lambda$ has finite global dimension and we obtain an equivalence of categories $\dbcoh{Y}\cong \dbmod\Lambda$. This generalises to the context of smooth DM stacks $\mathcal{Y}$. It is this derived equivalence between the endomorphism algebra of tilting objects and the derived category of coherent sheaves of a bonafide crepant resolutions that motivates the notion of NCCR. 

Using tilting theory, NCCRs have been obtained for a variety of cases, see for instance \cite{SVdBtoricI,SvdB17,MS25,svdbt2}. One key result in this context from previous work of Malter-Sheshmani we would like to point out is the following.
\begin{theorem}[Theorem 4.11 in\cite{MS25}]
    \label{Thm:NovaDMstack}
     Let $\Sigma$ be a complete simplicial fan such that $\mathcal{X}_\Sigma$ has a tilting complex $\mathcal{T}$. Let $p:\tot V\rightarrow X_\Sigma$ be a toric vector bundle on $X_\Sigma$ with fan $\mathcal{V}$. Let $f_\Sigma:\mathcal{X}_\Sigma\rightarrow X_\Sigma$ be the good moduli space. If $H^i(\mathcal{X}_\Sigma, \mathcal{T}^\vee\otimes\mathcal{T}\otimes \operatorname{Sym}^\bullet(f_\Sigma^\ast V^\vee))=0$ for all $i\neq 0$, then there is a tilting complex on $\mathcal{X}_{\mathcal{V}}$. If $V=\omega_{X_{\Sigma}}$, the ring $R=k[|\mathcal{V}|^\vee\cap M]$ then has an NCCR.
\end{theorem}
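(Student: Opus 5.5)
The plan is to realise the tilting complex on $\mathcal{X}_{\mathcal{V}}$ as the pullback of $\mathcal{T}$ along the bundle projection. Then, in the case $V=\omega_{X_\Sigma}$, I would use the standard Van den Bergh / Iyama–Wemyss argument to turn its endomorphism algebra into an NCCR of $R$.

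\emph{Step 1: identify the stack.} First I would identify $\mathcal{X}_{\mathcal{V}}$ with the total space of the pulled-back bundle $f_\Sigma^\ast V$ over $\mathcal{X}_\Sigma$. Concretely, the rays of $\mathcal{V}$ are the rays of $\Sigma$ (lifted via the splitting given by $V$) together with one ray for each summand of $V$. The Cox construction then gives $U_{\mathcal{V}}=U_\Sigma\times\A^{\operatorname{rk}V}$ with $S_{\mathcal{V}}=S_\Sigma$ acting on the fibre coordinates through the characters defining $V$. Hence there is an affine morphism $\tilde p:\mathcal{X}_{\mathcal{V}}\to\mathcal{X}_\Sigma$ with
\[
\tilde p_\ast\O_{\mathcal{X}_{\mathcal{V}}}\cong\operatorname{Sym}^\bullet(f_\Sigma^\ast V^\vee).
\]

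\emph{Step 2: the pullback is tilting.} Set $\tilde{\mathcal{T}}:=\tilde p^\ast\mathcal{T}$, which is perfect. Since $\tilde p$ is affine, adjunction and the projection formula give
\[
\Ext^i_{\mathcal{X}_{\mathcal{V}}}(\tilde{\mathcal{T}},\tilde{\mathcal{T}})\cong H^i\bigl(\mathcal{X}_\Sigma,\mathcal{T}^\vee\otimes\mathcal{T}\otimes\operatorname{Sym}^\bullet(f_\Sigma^\ast V^\vee)\bigr).
\]
This vanishes for $i\neq0$ by hypothesis. For generation, suppose $\Rhom(\tilde{\mathcal{T}},\mathcal{F})=0$. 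Then $\Rhom(\mathcal{T},\tilde p_\ast\mathcal{F})=0$, so $\tilde p_\ast\mathcal{F}=0$ because $\mathcal{T}$ generates. Affineness of $\tilde p$ then forces $\mathcal{F}=0$. So $\tilde{\mathcal{T}}$ is tilting.

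\emph{Step 3: $\Lambda$ is an NCCR when $V=\omega_{X_\Sigma}$.} Now take $V=\omega_{X_\Sigma}$. The new rays then all lie on a common hyperplane at height one, so $|\mathcal{V}|$ is a Gorenstein cone. The good moduli map $\pi:\mathcal{X}_{\mathcal{V}}\to\spec R$ is proper, birational and crepant, with $\omega_{\mathcal{X}_{\mathcal{V}}}\cong\O$. Set $\Lambda=\End(\tilde{\mathcal{T}})$.
\begin{enumerate}
\item $\Lambda$ is module-finite over $R$, since $\pi$ is proper and $\tilde{\mathcal{T}}$ is coherent.
\item $\gldim\Lambda<\infty$, from the derived equivalence $\dbcoh{\mathcal{X}_{\mathcal{V}}}\cong\dbmod\Lambda$ and the smoothness of the DM stack (Theorem \ref{Thm:4.12FK18}).
\item $\Lambda\in\operatorname{CM}R$. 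I would apply Grothendieck duality for $\pi$: triviality of $\omega_{\mathcal{X}_{\mathcal{V}}}$ together with the Ext-vanishing gives $\Rhom_R(\Lambda,\omega_R)\cong R\pi_\ast(\tilde{\mathcal{T}}\otimes\tilde{\mathcal{T}}^\vee)\cong\Lambda$ in degree $0$. Hence $\Ext^{>0}_R(\Lambda,R)=0$.
\item $\Lambda\cong\End_R(M)$ with $M=\pi_\ast\tilde{\mathcal{T}}$, which is a reflexive module (indeed a sum of rank-one reflexives when $\mathcal{T}$ is a sum of line bundles). Here I would use that $\pi$ is an isomorphism (of coarse spaces, away from the stacky structure) outside a closed set of codimension $\ge2$ in $\spec R$. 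So $\Lambda$ and $\End_R(M)$ are both reflexive and agree in codimension one, hence agree.
\end{enumerate}

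I expect the main obstacle to be this last step together with the CM property in the stacky setting. There one has to check that duality and the codimension-two comparison behave correctly for the good moduli space of a DM stack, rather than for an honest crepant resolution, and that $\pi_\ast\tilde{\mathcal{T}}$ really is reflexive. I would handle it by working on the toric open set where $\pi$ is an isomorphism, and by using that both sides are $\Z^{|\nu|}$-graded reflexive modules determined by their restriction there.
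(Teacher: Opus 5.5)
The paper does not prove this statement. It is quoted from \cite{MS25}, so the only comparison available is with the standard argument, and that is what you give. You pull $\mathcal{T}$ back along the affine projection $\tilde p$, compute self-Exts by adjunction and the projection formula, and get generation from conservativity of $\tilde p_\ast$. You then run the Van den Bergh/Iyama--Wemyss argument for the crepant map to $\spec R$. Steps 1, 2 and 3(1)--(3) are fine; Grothendieck duality for proper morphisms from tame DM stacks is available, so your worry about duality in the stacky setting is not a real obstruction.

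\textbf{The step that fails as written is 3(4).} The module $\pi_\ast\tilde{\mathcal{T}}$ need not be reflexive, even when $\mathcal{T}$ is a sum of line bundles. Here is a counterexample.
\begin{enumerate}
\item Take $\Sigma$ the fan of $\mathbb{P}^1$, so $\mathcal{X}_{\mathcal{V}}=\tot\O_{\mathbb{P}^1}(-2)$ and $R$ is the $A_1$ singularity.
\item Take $\mathcal{T}=\O(2)\oplus\O(3)$. This is a twist of $\O\oplus\O(1)$, so it is tilting and satisfies the hypothesis, since only $\mathcal{T}^\vee\otimes\mathcal{T}$ enters.
\item Then $\tilde p^\ast\O(2)=\O(-D_0)$ with $D_0$ the zero section. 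Hence $\pi_\ast\O(-D_0)=\mathfrak{m}$, the maximal ideal of the singular point, whose reflexive hull is $R$.
\end{enumerate}
The reason is that $\pi$ contracts the divisor $D_0$: the ray $(0,1)$ is interior to $|\mathcal{V}|$. So sections over the open set $U$ where $\pi$ is an isomorphism need not extend.

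An NCCR requires $M\in\operatorname{ref}R$, and your ``both reflexive, agree in codimension one'' argument needs $\End_R(M)$ to be reflexive, so this must be repaired. The repair is your own restriction-to-$U$ idea, applied to the right module: with $j:U\hookrightarrow\spec R$, use $M^{\ast\ast}=j_\ast(\tilde{\mathcal{T}}|_{\pi^{-1}U})$. Then $\Lambda$ acts on $M^{\ast\ast}$ by functoriality. The map $\Lambda\to\End_R(M^{\ast\ast})$ goes between reflexive modules and is an isomorphism over $U$, hence an isomorphism.

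Alternatively, twist $\mathcal{T}$ so that $\O$ is a summand. Then $M=\Hom(\O,\tilde{\mathcal{T}})$ is an $R$-summand of $\Lambda$, hence CM by (3), hence reflexive. For a genuine complex $\mathcal{T}$ you must also say what $M$ is. Over the regular set $U$, $\tilde{\mathcal{T}}$ has no nonzero self-Exts outside degree zero, so it is a shifted vector bundle there, and $M$ is its push-forward.

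\textbf{Two smaller points.}
\begin{enumerate}
\item $\pi:\mathcal{X}_{\mathcal{V}}\to\spec R$ is not a good moduli space morphism. The good moduli space of $\mathcal{X}_{\mathcal{V}}$ is its coarse space $X_{\mathcal{V}}$, and $\pi$ is the composite with the toric contraction $X_{\mathcal{V}}\to\spec R$. The properties you actually use (proper, birational, crepant, an isomorphism away from a closed set of codimension at least two) do hold for this composite.
\item ``All new rays lie at height one'' does not make $|\mathcal{V}|$ convex. In fact $|\mathcal{V}|=\{(x,t)\mid t\ge\varphi(x)\}$ for the piecewise linear $\varphi$ on $\Sigma$ with $\varphi(u_\rho)=1$. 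This is a convex cone only when $\varphi$ is convex, and properness of $\pi$ (hence finiteness of $\Lambda$ over $R$) depends on it. The statement tacitly assumes this. It does hold in the application to Theorem \ref{Thm:AlmSim}, where $\Sigma$ refines the face fan of $Q$ without adding rays.
\end{enumerate}
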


Other, more algebraic, tools to construct NCCRs include dimer models and conic/divisorial modules. Using dimer models, Broomhead \cite{Broomhead} proved the existence of NCCRs for three-dimensional Gorenstein toric algebras, whereas conic modules have been used to construct NCCRs for all simplicial toric algebras by Faber, Muller and Smith \cite{FMS19}. Recently Tomonaga \cite{Tomo25} used divisorial modules and the combinatorics of upper sets to construct NCCRs for Gorenstein toric algebras coming from cones $\sigma$ with $|\sigma(1)|=\dim \sigma+1$.

\section{Descending NCCRs}\label{Sec:Descent}

The main technical result of this paper concerns the descent of toric NCCRs in the following sense. 
Up to a change of basis, all Gorenstein cones can be expressed as $\sigma'=\cone(Q\times\{1\})$ for some lattice polytope $P$. Given an NCCR for the toric algebra of such a cone $\sigma'$ and a face $P$ of $Q$, geometrically we expect an NCCR for the toric algebra associated to $\sigma=\cone(P\times\{1\})$ to exist. Note here that the two cones lie in distinct spaces $N'_\R$ and $N_\R$, having different dimensions. The geometric expectation comes from the fact that if we obtain the NCCR via a tilting object associated to a simplicial subdivision $\Sigma'$ of $\sigma'$, the regular triangulation of $Q$ giving that subdivision descends to a regular subdivision of $P$. The toric stack coming from the restriction of $\Sigma'$ to $\sigma$ is thus a smooth toric DM stack that provides a crepant resolution of $X_\sigma$. Unfortunately, tilting is not a local property and we cannot simply presume the existence of a tilting object. However, for the case of a toric NCCR, we are able to prove the desired descent in the following Theorem.
\begin{theorem}
    \label{Thm:FaceOfReflexiveTORICNCCR}
    Let $Q\subset \R^k$ be a lattice polytope and let $\sigma$ be the cone $\sigma=\cone(Q\times\{1\})\subset \R^{k+1}$ together with its toric algebra $R_\sigma=k[\sigma^\vee\cap M_\sigma]$, where $M_\sigma$ is the character lattice of the toric variety defined by $\sigma$. Suppose $Q$ is lattice equivalent to a face $F$ of a lattice polytope $P\subset \R^n$. Consider the cone $\sigma'=\cone(P\times\{1\})\subset\R^{n+1}$. Denoting the corresponding character lattice by $M'$, let $R'=k[(\sigma')^\vee\cap M']$ be the toric algebra associated to $\sigma'$. Suppose $R'$ has a toric NCCR $\Lambda'$. Then $R$ has a toric NCCR.
\end{theorem}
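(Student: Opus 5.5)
The plan is to realize $R$, up to isomorphism, as a direct factor of a localization of $R'$, and to transport the toric NCCR $\Lambda'$ along that localization.

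\textbf{Step 1: the face cone.} Since $F$ is a face of $P$, the cone $\tau:=\cone(F\times\{1\})$ is a face of $\sigma'=\cone(P\times\{1\})$. A lattice equivalence $Q\cong F$ extends to a lattice isomorphism between $\sigma$ and $\tau$, viewed inside the saturated sublattice $N_\tau:=\operatorname{span}(\tau)\cap N'$. Hence $R\cong R_\tau:=k[\tau^\vee\cap M_\tau]$, where $M_\tau=M'/(\tau^\perp\cap M')$.

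\textbf{Step 2: localization.} Pick $m_0\in M'$ in the relative interior of $(\sigma')^\vee\cap\tau^\perp$ and localize at $\chi^{m_0}$. By the standard toric fact,
\[
R'_{\chi^{m_0}}=k[\tau^\vee\cap M']\cong R_\tau\otimes_k k[\Z^{r}],
\]
where $r=\operatorname{rk}(\tau^\perp\cap M')$. The splitting uses that $N_\tau$ is saturated, so $M'\cong M_\tau\oplus(\tau^\perp\cap M')$ non-canonically.

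Localization preserves the three defining properties of the NCCR:
\begin{itemize}
\item it preserves reflexivity;
\item it preserves the Cohen--Macaulay property, because the vanishing of $\Ext^i(\,\cdot\,,R')$ is compatible with flat base change;
\item it does not increase global dimension.
\end{itemize}
Therefore $\Lambda'_{\chi^{m_0}}=\End_{R'_{\chi^{m_0}}}(M'_{\chi^{m_0}})$ is a toric NCCR of $R_\tau\otimes k[\Z^r]$.

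\textbf{Step 3: splitting the divisorial modules.} This is the step I expect to be the main obstacle, and I would carry it out combinatorially. Each summand of $M'$ is a divisorial ideal $\bigoplus\{k\chi^m : \langle m,u_\rho\rangle\ge -a_\rho \ \forall\rho\in\sigma'(1)\}$. After inverting $\chi^{m_0}$, only the inequalities for rays $\rho\in\tau(1)$ survive. Writing $m=(\bar m,m'')$ via the chosen splitting, these inequalities depend only on $\bar m$. Consequently the localized summand is $I_D\otimes_k k[\Z^r]$, where $I_D$ is the divisorial $R_\tau$-module attached to $D=\sum_{\rho\in\tau(1)}a_\rho D_\rho$. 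Hence
\[
M'_{\chi^{m_0}}\cong M\otimes_k k[\Z^r], \qquad M:=\bigoplus_i I_{D_i}.
\]
Since $M$ is finitely generated, $\End$ commutes with this flat base change, giving $\Lambda'_{\chi^{m_0}}\cong\Lambda\otimes_k k[\Z^r]$ with $\Lambda:=\End_{R_\tau}(M)$. Here $M$ is a sum of rank-one reflexive ideals, so $\Lambda$ is of toric type.

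\textbf{Step 4: descending the properties.} Write $L=k[\Z^r]$; then $R_\tau\to R_\tau\otimes L$ is faithfully flat.
\begin{itemize}
\item \emph{Cohen--Macaulay.} We have $\Ext^i_{R_\tau\otimes L}(\Lambda\otimes L,R_\tau\otimes L)\cong\Ext^i_{R_\tau}(\Lambda,R_\tau)\otimes L$. The left side vanishes for $i>0$, so by faithful flatness $\Ext^i_{R_\tau}(\Lambda,R_\tau)=0$, i.e. $\Lambda\in\operatorname{CM}R_\tau$.
\item \emph{Finite global dimension.} For a finitely generated $\Lambda$-module $N$, I would show $\operatorname{pd}_\Lambda N=\operatorname{pd}_{\Lambda\otimes L}(N\otimes L)$. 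Since $\Lambda\otimes L$ is free over $\Lambda$, base change of a projective resolution gives $\operatorname{pd}_{\Lambda\otimes L}(N\otimes L)\le\operatorname{pd}_\Lambda N$. For the reverse inequality, note $\Ext^i_{\Lambda\otimes L}(N\otimes L,-\otimes L)\cong\Ext^i_\Lambda(N,-)\otimes L$, and use faithful flatness of $L$ over $k$. Alternatively, restrict along the degree-$0$ part of the $\Z^r$-grading.
\end{itemize}
Thus $\gldim\Lambda\le\gldim\Lambda'_{\chi^{m_0}}<\infty$, so $\Lambda$ is a toric NCCR of $R_\tau\cong R$.
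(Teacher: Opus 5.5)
Your proof is correct and has the same overall structure as the paper's: localize $R'$ at the character $\chi^{m_0}$ cutting out the face, check that reflexivity, the Cohen--Macaulay property and finite global dimension survive localization, identify $R'_{\chi^{m_0}}\cong R\otimes_k k[M_2]$ via a splitting of the saturated sublattice, and then descend to $R$.

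The genuine difference is the key step showing that the localized module has the form $M\otimes_k k[\Z^r]$ with $M$ a sum of rank-one reflexive $R$-modules. The paper argues abstractly. It invokes Ischebeck's exact sequence to get $\operatorname{Pic}(X_\sigma\times(k^\times)^{n-k})\cong\operatorname{Pic}(X_\sigma)$, and from this deduces that every rank-one module over $R\otimes k[M_2]$ is $M_R\otimes k[M_2]$. You instead compute directly. Inverting $\chi^{m_0}$ discards exactly the inequalities for rays outside $\tau(1)$, and the surviving inequalities only see the $M_\tau$-component. Here you use, implicitly, that every class contains a torus-invariant representative.

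Your route is more elementary and also more accurate. Rank-one reflexive modules are governed by the class group, not the Picard group, and the Picard group of an affine toric variety is trivial. So your computation is what actually delivers the needed statement $\operatorname{Cl}(X_\sigma\times(k^\times)^{r})\cong\operatorname{Cl}(X_\sigma)$ at the level of modules. It also identifies the descended NCCR explicitly as $\End_R\bigl(\bigoplus_i I_{D_i}\bigr)$, with $D_i$ the restrictions to the rays of the face of the divisors defining the summands of $M'$. The paper's version is shorter and less tied to the toric description of the summands, but it needs that class-group input implicitly.

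Your Cohen--Macaulay descent (flat base change of $\Ext$, then using $E\otimes_k L=0\Rightarrow E=0$) is the same computation as the paper's K\"unneth argument. Your comparison $\operatorname{pd}_\Lambda N=\operatorname{pd}_{\Lambda\otimes L}(N\otimes L)$ gives an actual proof that finite global dimension descends, a step the paper only asserts.
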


\begin{proof}

    Observe that the toric algebra $R_F=k[\sigma_F^\vee\cap M']$ associated to the cone $\sigma_F=\cone(F\times\{1\})$ is obtained by localising the algebra $R'$ on some multiplicative set $S$, i.e. $R_F=S^{-1}R'$. Specifically, this is the content of \cite[Proposition 1.3.16]{CLS}: $R_F$ is the localisation of $R'$ at $\chi^m\in R'$, where $m\in (\sigma')^\vee\cap M'$ is the lattice element such that $\sigma_F=\sigma'\cap H_m$ ($H_m$ is the hyperplane $\{u\in (N')_\R\mid\langle m,u\rangle=0\}\subseteq (N')_\R$). The proof of Theorem \ref{Thm:FaceOfReflexiveTORICNCCR} now splits into two parts. Firstly, we show that $R_F$ has an NCCR and then we relate $R_F$ to $R$ and show that from there we can deduce the existence of an NCCR for $R$.

    Write the NCCR $\Lambda'$ of $R'$ as $\Lambda'=\End_{R'}(M')$ with $M'\in \operatorname{ref}R$. Then $\Lambda'\in \operatorname{CM}R$ and $\gldim\Lambda'<\infty$. For the first part of the proof, we shall show that $S^{-1}\Lambda'=\End_{R_F}(S^{-1}M)$ where $S^{-1}M\in \operatorname{ref}R_F$, $S^{-1}\Lambda'\in \operatorname{CM}R_F$ and $\gldim(S^{-1}\Lambda')<\infty$.
    We begin by noting that the functor $S^{-1}$ is an exact functor \cite[Proposition 3.3]{AtiyahMcD}. 
    
    Since $\Lambda'$ is Noetherian, any $S^{-1}\Lambda'$-module arises as module of the form $S^{-1}M$ for some $\Lambda'$-module $M$. Consider a projective resolution \[0\rightarrow P^m\rightarrow\dots\rightarrow P^1\rightarrow M\rightarrow 0\] of $M$ as $\Lambda'$-module. Since $d:=\gldim(\Lambda')<\infty$, we have $m<d$.  
    
    Apply the functor $S^{-1}$ to it, which is exact and thus gives an exact sequence \[
    0\rightarrow S^{-1}P^m\rightarrow S^{-1}P^{m-1}\rightarrow\dots\rightarrow S^{-1}P^1\rightarrow S^{-1}M\rightarrow 0.
    \]
    Each of the modules $S^{-1}P^i$ is projective as $S^{-1}\Lambda'$-module \cite[Lemma 10.109.13]{stacks}.

    Now, let us show that $S^{-1}\Lambda'$ is a trivial Azumaya algebra, i.e. of the form $\End_{S^{-1}R'}(M'')$ for some $M''\in \operatorname{ref}S^{-1}\Lambda'$. The NCCR $\Lambda'$ itself is of the form $\End_{R'}(M')$ with $M'\in\operatorname{ref} R'$. Note that as the module $M'$ is finitely generated and $R'$ is Noetherian, $M'$ is finitely presented. For a multiplicative set $S$ over the Noetherian ring $R'$, if $M'$ is finitely presented, localisation commutes with the functor $\Hom(M,-)$. 
   
    Hence, \[
    S^{-1}\End_{R'}(M')=\End_{S^{-1}R'}(S^{-1}M').
    \]

    It remains to show that $M'':=S^{-1}M'\in \operatorname{ref}S^{-1}R'$, noting that the rank of a module is preserved under localisation. Since $R'$ is Noetherian, the dual of a finitely generated module is finitely presented, and so $M'\in\operatorname{ref}R'$ implies that $\Hom_{R'}(M',R')$ is finitely presented. Being finitely presented localises (as the localisation functor is exact) and so we have
    \begin{align*}
        S^{-1}M' &=S^{-1}\Hom_{R'}(\Hom_{R'}(M',R'),R')\\
        &=\Hom_{S^{-1}R'}(S^{-1}\Hom_{R'}(M',R'),S^{-1}R')\\
        & = \Hom_{S^{-1}R'}(\Hom_{S^{-1}R'}(S^{-1}M',S^{-1}R'),S^{-1}R').
    \end{align*}
    Hence, $M''\in\operatorname{ref}S^{-1}R'$.
    Finally, we aim to show that $S^{-1}\Lambda'\in\operatorname{CM}S^{-1}\Lambda'$. The algebra $S^{-1}R'$ is commutative Noetherian, as $R'$ is. Since the functor $S^{-1}$ is exact, for any complex $C^\bullet$, we have $S^{-1}H_i(C^\bullet)=H_i(S^{-1}C^\bullet)$. 
    
    Applying this, we obtain \[
    \Ext_{R'}^i(\Lambda',R')=\Ext_{S^{-1}R'}^i(S^{-1}\Lambda',S^{-1}R').
    \]
    Since $\Lambda'\in\operatorname{CM}R'$, the left-hand side vanishes for $i>0$, and thus so does the right-hand side. In turn, this implies that $S^{-1}\Lambda'\in\operatorname{CM}S^{-1}R'$, as required. So $S^{-1}\Lambda'$ is a toric NCCR for $R_F=S^{-1}R'$ when $\Lambda'$ is for $R'$. 
    
   \noindent Next, let us relate $R_F$ and $R$, following closely the proof of \cite[Theorem 1.3.12]{CLS}. Denote by $N_1$ the smallest saturated sublattice of $N'$ (the dual lattice to $M'$) containing the generators of the cone $\sigma_F$. The quotient $N'/N_1$ is torsion-free and so $N'=N_1\oplus N_2$, giving $M'=M_1\oplus M_2$ on the level of dual lattices. Let $\sigma_{F,N_1}$ be the cone $\sigma_F$ considered within the lattice $N_1$. Then for the relevant semigroups we have $(\sigma_F)^\vee\cap M'=(\sigma_{F,N_1})^\vee\cap M_1\oplus M_2$, giving \[R_F\simeq k[(\sigma_{F,N_1})^\vee\cap M_1]\otimes_k k[M_2]. \]
    
    But now by the lattice equivalence of the face $F$ of $P$ to $Q$, we see that $R\simeq k[(\sigma_{F,N_1})^\vee\cap M_1]$, and so $R_F\simeq R\otimes_k k[M_2]$. Note that the algebra $k[M_2]$ is the coordinate ring of a torus of dimension $n-k$. Since both $R$ and $k[M_2]$ are integral domains, we have, by a result of Ischebeck's \cite[Theorem 1.7]{Ischebeck}, the following exact sequence\[
    0\rightarrow \operatorname{Pic}(\spec R)\oplus \operatorname{Pic}(\spec k[M_2])\rightarrow \operatorname{Pic}(\spec(R\otimes k[M_2]))\rightarrow \operatorname{Pic}(k(\spec R)\otimes k(\spec k[M_2])).
    \]
    Since the torus has trivial class-group, this reduces to the exact sequence
    \[
    0\rightarrow \operatorname{Pic}(X_\sigma)\rightarrow \operatorname{Pic}(X_\sigma\times (k^\times)^{n-k})\rightarrow \operatorname{Pic}(k(X_\sigma)\otimes k((k^\times)^{n-k})).
    \]
    In particular, the tensor product $k(X_\sigma)\otimes k((k^\times)^{n-k})$ can be written as fraction ring $T^{-1}A$ of the ring $A=k(X_\sigma)[z_1,z_2,\dots,z_{n-k}]$ over the field $k(X_\sigma)$ and is thus a UFD, with trivial Picard group.

    Thus, $\operatorname{Pic}(X_\sigma\times (k^\times)^{n-k})\cong \operatorname{Pic}(X_\sigma)\oplus \operatorname{Pic}((k^\times)^{n-k})\cong\operatorname{Pic}(X_\sigma)$. In particular, all rank 1 modules of $R\otimes k[M_2]$ decompose as tensor products of rank 1 modules over $R$ and $k[M_2]$, respectively. Since $\operatorname{Pic}(k[M_2])=0$, we have that the unique rank 1 module (up to isomorphism) is simply $k[M_2]$ itself. Thus, we have $M''$ is isomorphic to a direct sum of modules of the form $M_R\otimes k[M_2]$, where $M_R$ is a rank 1 module over $R$. Furthermore, $M_R\in \operatorname{ref}R$ due to the flatness of $-\otimes k[M_2]$.
   
    The flatness and finite presentation of $M_R$ also gives\[
    \End_{R\otimes k[M_2]}(M_R\otimes k[M_2])\cong \End_R(M_R)\otimes k[M_2].
    \]
    A K\"unneth-type result now yields
    \[
    \Rhom_R(M_R,R)\otimes_k \Rhom_{k[M_2]}(k[M_2],k[M_2])\simeq \Rhom_{R\otimes k[M_2]}(p_1^\ast M_R\otimes p_2^\ast k[M_2], p_1^\ast R\otimes p_2^\ast k[M_2]),
    \]
    where $p_1, p_2$ are the natural projections from $R\otimes k[M_2]$ to  $R$ and $k[M_2]$.
    Taking $H^i$ on both sides gives 
    \[\Ext^i_{R\otimes k[M_2]}(\End(M_R)\otimes k[M_2],R\otimes k[M_2])\cong \bigoplus_{p+q=i}\Ext^p_R(\End_R((M_R),R))\otimes \Ext^q(k[M_2],k[M_2]).\]
    Since the left-hand side vanishes for $i>0$, the equality to the right-hand side implies $\Ext^i(\End_R(M_R),R)=0$ for $i>0$ and thus $\End_R(M_R)\in \operatorname{CM}R$. 

    Finally, we note that  $\gldim \End_R(M_R)<\infty$ as $\End_{R\otimes k[M_2]}(M_R\otimes k[M_2])$ is finite. Hence, $\End_R(M_R)$ is a toric NCCR of $R$.
    
\end{proof}

The proof above relied on descending rank 1 reflexive modules on $R\otimes k[M_2]$ to reflexive modules on $R$, which does not happen in the same way for higher rank. Hence, the proof from that point onwards needs adjustment when considering NCCRs of a non-toric nature. We do still believe the statement to hold and thus reiterate the following conjecture.
\begin{conjecture}
    \label{Conj:FaceRkBigger1}
The statement of Theorem \ref{Thm:FaceOfReflexiveTORICNCCR} holds also if $\Lambda'$ is a non-toric NCCR. 
\end{conjecture}

\section{Application to (almost) simplicial toric algebras}\label{sec:Applications}

Heuristically, as detailed in \cite{MS25}, Theorem \ref{Thm:FaceOfReflexiveTORICNCCR} and Conjecture \ref{Conj:FaceRkBigger1} allows us to reduce the Conjecture \ref{Conj:affinetoric} to the case of reflexive Gorenstein cones. Indeed, to do so we note the following well-known fact of toric geometry.
\begin{proposition}[Proposition 2.2 in \cite{HM06}]
    \label{Prop:FaceOfReflexive}
    Let $P$ be a lattice polytope. Then $P$ is lattice equivalent to a face of some reflexive polytope $Q$.
\end{proposition}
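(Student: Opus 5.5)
The plan is to prove Proposition \ref{Prop:FaceOfReflexive} by an explicit construction: build a large reflexive polytope $Q$ containing a lattice copy of $P$ as a face. A lattice polytope $Q$ is reflexive if it contains the origin in its interior and its facet description has the form
\[
Q=\{x : \langle a_i,x\rangle\le 1\}
\]
with primitive integral $a_i$. Equivalently, every facet lies at lattice distance one from the origin.

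First, reduce to the full-dimensional case. After an affine lattice isomorphism we may assume $P\subset \R^d\times\{0\}\subset\R^n$, where $d=\dim P$. So it suffices to embed a full-dimensional $d$-polytope as a face of some reflexive polytope in a possibly larger dimension.

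Write $P=\{x\in\R^d : \langle b_j,x\rangle\le c_j\}$ with $b_j$ primitive integral and $c_j\in\Z$. The natural idea is to lift $P$ to height $1$ and take a "pyramid-like" hull. Concretely, I would consider $P\times\{1\}\subset\R^{d+1}$ together with enough additional lattice points below it, such as $-Ne_{d+1}$ and points $\pm e_i$ at suitable heights. These should be chosen so that:
\begin{enumerate}
\item $P\times\{1\}$ is the facet $\{x_{d+1}=1\}$, which is at lattice distance $1$ from the origin;
\item every other facet has an equation $\langle a,x\rangle=1$ with $a$ integral primitive.
\end{enumerate}

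The cleanest route is to work dually. Each facet of $P\times\{1\}$ in the hull must be supported by an inequality of the form
\[
\langle b_j,x\rangle+(1-c_j)x_{d+1}\le 1 .
\]
This is integral and primitive because $b_j$ is primitive. So I would define
\[
Q=\{(x,t)\in\R^{d+1} : t\le 1,\ \langle b_j,x\rangle+(1-c_j)t\le 1 \text{ for all } j,\ \text{plus extra inequalities}\},
\]
where the extra inequalities have the form $\langle a,(x,t)\rangle\le 1$ with $a$ integral and primitive. They are chosen to cut $Q$ down to a bounded polytope containing $0$ in its interior, without cutting into the facet $P\times\{1\}$.

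The main obstacle is that a polytope defined by primitive integral inequalities with right-hand side $1$ need not be a lattice polytope. Its vertices might be rational, whereas reflexivity requires lattice vertices. I expect this to be the hard step. The plan is to choose the extra inequalities so that all new vertices are forced to be lattice points, for instance inequalities like $-t\le 1$ and $\pm x_i - m t\le 1$ with large $m$. Then each vertex is determined by a unimodular subsystem together with the $P$-facets at $t=1$ or at $t=-1$. At $t=-1$ the constraints become $\langle b_j,x\rangle\le c_j-2$ combined with a box. This need not be a lattice polytope either. To fix it, I would first translate $P$ so that the origin is a lattice point in the interior of a dilate, and replace $P$ by a shifted copy to get control.

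If this direct approach stalls, the fallback is an iterated construction. One can take the free sum or join of $P$ with suitable simplices, or use the Cayley trick, increasing the dimension by more than one each time. Then one checks facet by facet that every facet sits at height $1$ and all vertices are the lattice points of $P$ together with standard basis vectors. Since the proposition permits $Q$ of arbitrary dimension, this extra room is what I would exploit to kill the non-lattice-vertex issue.
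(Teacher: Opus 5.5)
There is a genuine gap. The step you yourself flag as the hard one, integrality of the vertices, is never resolved, and the route you choose for it cannot work in general. Your direct approach tries to bring every facet of $P$ to lattice distance $1$ using a single extra coordinate.

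\begin{itemize}
\item The box-type inequalities $\pm x_i-mt\le 1$, $-t\le 1$ give no control over vertices at intermediate heights.
\item The assertion that vertices come from unimodular subsystems is unfounded: the normals $b_j$ at a vertex of $P$ need not form a lattice basis.
\item At $t=-1$ the constraints read $\langle b_j,x\rangle\le 2-c_j$, not $c_j-2$.
\end{itemize}

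More decisively, a reflexive $(d+1)$-polytope with $P\times\{1\}$ as a facet need not exist. For $P=[0,5]$ this would be a reflexive polygon with an edge of lattice length $5$. Put that edge on $\{y=-1\}$. Since the origin is interior, some vertex has $y\ge 1$. The triangle it spans with the edge meets $\{y=0\}$ in an interval of length at least $5/2$. The relative interior of that interval lies in the interior of the polygon and contains two lattice points, contradicting reflexivity. So the one-extra-dimension construction is a dead end, not a technical hurdle.

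The fallback does not repair this as stated. In a free sum the copy of $P$ passes through the interior point, so it is not a face. Join or Cayley polytopes (in their standard lattice embedding) lie between two adjacent parallel lattice hyperplanes, so they have no interior lattice point at all.

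The missing idea is to lower the facet distances \emph{one unit at a time}, spending one new dimension per unit, via a construction whose vertices are lattice points automatically. This is the repeated wedge construction that the paper attributes to \cite{HM06} in the proof of Corollary~\ref{Cor:SimplicialHasNCCR}.

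First make $P$ a face of a lattice polytope having the origin as an interior point, e.g.\ $\conv(P\times\{1\},B\times\{-1\})$ with $B=-P+[-1,1]^d$. Replacing $P$ by this polytope, assume $0\in\operatorname{Int}P$. Write $P=\{x:\langle a_i,x\rangle\ge -b_i\}$ with $a_i$ primitive and $b_i\in\Z_{\ge 1}$. If some $b_1\ge 2$, take the wedge over the facet $\{\langle a_1,x\rangle=-b_1\}$:
\[
W=\{(x,t): x\in P,\ -1\le t\le \langle a_1,x\rangle+b_1-1\}.
\]
This $W$ has the following properties.
\begin{itemize}
\item Its vertices are $(v,-1)$ and $(v,\langle a_1,v\rangle+b_1-1)$ for the vertices $v$ of $P$, so $W$ is a lattice polytope.
\item The origin is interior because $b_1-1>0$.
\item $P\times\{-1\}$ is a facet at distance $1$.
\item The new facet $\langle a_1,x\rangle-t\ge-(b_1-1)$ has primitive normal $(a_1,-1)$ and distance $b_1-1$.
\item The remaining facets keep their distances $b_i$.
\end{itemize}
Hence $\sum_i(b_i-1)$ drops by one. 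After finitely many steps you reach a reflexive polytope having $P$ as a face, since faces of faces are faces.
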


In \cite{MS25}, the authors prove the following result (the word toric was omitted but the proof shows the toric nature of the NCCR as it derives from a full strong exceptional collection).
\begin{theorem}[Theorem 5.1 in \cite{MS25}]
    \label{Thm:FanoAlmostSimplicial}
    Let $P\in N_{\R}\cong \R^n$ be a simplicial, reflexive polytope with $\le n+2$ vertices. Consider the cone $\sigma=\cone(P\times\{1\})$. Then $R=k[\sigma^\vee\cap M]$ has a toric NCCR.
\end{theorem}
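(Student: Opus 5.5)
The plan is to realise the cone $\sigma$ as a total space cone $|\mathcal{V}|$ for $V=\omega_{X_\Sigma}$ over a complete simplicial fan $\Sigma$ in $\R^n$, namely the face fan of $P$, and then apply Theorem \ref{Thm:NovaDMstack}. Concretely, since $P$ is simplicial and reflexive, its face fan $\Sigma$ (cones over the proper faces of $P$) is a complete simplicial fan whose rays are generated by the vertices of $P$. Reflexivity guarantees that $-K_{X_\Sigma}=\sum_\rho D_\rho$ is the Cartier divisor whose polytope is $P^\circ$. The fan $\mathcal{V}$ of $\tot\omega_{X_\Sigma}$ has rays $(u_\rho,1)$, so $|\mathcal{V}|=\cone(P\times\{1\})=\sigma$. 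Hence Theorem \ref{Thm:NovaDMstack} reduces the statement to two requirements. First, we need a tilting bundle $\mathcal{T}$ on $\mathcal{X}_\Sigma$, given as a full strong exceptional collection of line bundles. Second, we need the vanishing
\[
H^i\bigl(\mathcal{X}_\Sigma,\mathcal{L}_a^{-1}\otimes\mathcal{L}_b\otimes\O(-jK)\bigr)=0,\qquad i>0,\ j\ge0,
\]
for all pairs $\mathcal{L}_a,\mathcal{L}_b$ in the collection. The resulting NCCR is $\End(\bigoplus p^\ast\mathcal{L}_a)$ pushed to the affine cone, which is an endomorphism ring of a sum of rank one reflexive modules, and hence toric.

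Next I split by the number of vertices. If $P$ has $n+1$ vertices, $\mathcal{X}_\Sigma$ is a weighted projective stack (possibly a quotient by a finite group). The collection $\O,\O(1),\dots$ in the appropriate window of $\operatorname{Pic}$ of length $|\operatorname{Pic}|$-many along the degree is full and strong by Borisov–Hua \cite{BH09}. If $P$ has $n+2$ vertices, the Gale dual is rank two, and $\operatorname{Pic}_\R(\mathcal{X}_\Sigma)\cong\R^2$ with the rays grouped into two primitive collections $I_1\sqcup I_2=\Sigma(1)$. This follows from the standard classification: a simplicial complete fan with $n+2$ rays is a splitting fan. By Lemma \ref{Lem:RedHomUnionofPrim}, the only $I$ with nonzero reduced homology are $I_1$, $I_2$ and $\Sigma(1)$. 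So there are exactly three forbidden cones in Corollary \ref{Cor:AcyclicForbidden}, and all are explicit in the plane. I would take the Borisov–Hua collection: the line bundles whose classes lie in a half-open parallelogram-type region $\{\mathcal{L}:\ \mathcal{L}-\mathcal{L}'\notin\bigcup_I F_I \ \text{for all}\ \mathcal{L},\mathcal{L}'\}$, chosen as the lattice points of a translate of $-\frac{1}{2}\sum_\rho[0,1)D_\rho$-type zonotope. Borisov–Hua show such collections are full and strong on toric DM stacks of Picard rank two.

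The main obstacle is the vanishing for all twists by $\O(-jK)$, $j\ge0$, not merely $j=0$. For this I would use the geometry of the forbidden cones. The difference $\mathcal{L}_b-\mathcal{L}_a$ lies in a bounded region $Z-Z$ around the origin, and $-jK=j\sum_\rho D_\rho$ moves it in the direction of $\sum D_\rho$. I must check that the ray $Z-Z+\R_{\ge0}\sum D_\rho$ avoids $F_{I_1}$, $F_{I_2}$ and $F_{\Sigma(1)}$. The last one is $-\sum D_\rho$ minus effective classes, which the ray moves away from. For $F_{I_1}=-\sum_{\rho\in I_2}D_\rho+\sum_{I_1}\R_{\ge0}D_\rho-\sum_{I_2}\R_{\ge0}D_\rho$, adding $\sum_{I_1}D_\rho+\sum_{I_2}D_\rho$ increases the $I_2$-coordinate against the cone's direction. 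In the rank two picture this amounts to comparing slopes of the boundary rays with the anticanonical direction. Reflexivity, meaning all $u_\rho$ at height one, should force $\sum D_\rho$ to lie in the interior of the effective cone between the two rays spanned by $\sum_{I_1}D_\rho$ and $\sum_{I_2}D_\rho$. I would verify that the half-strip $Z-Z+\R_{\ge0}(-K)$ stays strictly inside the complement, using the $j=0$ case from strongness as the base. If the chosen window fails this, I would shrink it to the $(1/2)$-window of \cite{BH09}, which is symmetric under $\mathcal{L}\mapsto\mathcal{L}^{-1}$, so that its difference set is centered and the anticanonical direction is "interior". Finally, I would apply Theorem \ref{Thm:NovaDMstack} and check toricity as above.
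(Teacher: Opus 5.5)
Your strategy is the one behind this result. The paper only cites it from \cite{MS25}, but its proof of Theorem \ref{Thm:AlmSim} follows exactly your template: apply Theorem \ref{Thm:NovaDMstack} with $V=\omega_{X_\Sigma}$ to the Picard rank $\le 2$ Fano stack of the face fan, using a Borisov--Hua collection of line bundles. The gap is in the one step that goes beyond citation, the vanishing for all twists by $-jK$, $j\ge 0$. You do not prove it, and the argument you sketch would not prove it.

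In $\operatorname{Pic}_\R(\mathcal{X}_\Sigma)\cong\R^2$ there is no ``$I_2$-coordinate''. In your formula for $F_{I_1}$ the cone opens in the directions $+D_\rho$, $\rho\in I_1$. So the summand $\sum_{I_1}D_\rho$ of $-K$ pushes \emph{towards} $F_{I_1}$, while $\sum_{I_2}D_\rho$ pushes away, and everything depends on how these balance. Your proposed criterion, that $\sum_\rho D_\rho$ lies in the cone spanned by $\sum_{I_1}D_\rho$ and $\sum_{I_2}D_\rho$, is a tautology and decides nothing.

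The missing idea is the functional $\alpha$ of Lemma \ref{Lem:collectionsrandalpha}.
\begin{itemize}
\item Let $\sum_i\alpha_iv_i=0$, $\sum_i\alpha_i=0$ be the affine dependence of the $n+2$ vertices; equivalently, the linear relation among the rays $(v_i,1)$ of $\sigma$.
\item Simpliciality forces every $\alpha_i\neq 0$; otherwise the other $n+1$ vertices lie in a supporting hyperplane and span a facet that is not a simplex.
\item The sign classes form the Radon partition, hence are exactly $I_1$ and $I_2$. Say $\alpha>0$ on $I_1$, and put $c=\sum_{I_1}\alpha_i$.
\item With your formula, $F_{I_1}\subseteq\{\alpha\ge c\}$, and symmetrically $F_{I_2}\subseteq\{\alpha\le -c\}$.
\item The top-cohomology cone $-\sum_\rho D_\rho-\sum_\rho\R_{\ge0}D_\rho$ lies in $\{f\le -1\}$. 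Here $f(D_{\rho_i})=r_i$ for a relation $\sum r_iv_i=0$ with $r_i>0$, $\sum r_i=1$.
\end{itemize}
The decisive point is $\alpha(-K)=\sum_i\alpha_i=0$ and $f(-K)=1$. Thus the anticanonical twist moves parallel to the supporting lines of $F_{I_1},F_{I_2}$ and away from the top cone. This comes from the vertices of $P$ sitting at height one in $\sigma$. Reflexivity plays no role in this step; it is needed only to make $\omega_{X_\Sigma}$ a line bundle (and the vertices primitive), so that Theorem \ref{Thm:NovaDMstack} applies.

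You also need to commit to the right window.
\begin{itemize}
\item The set $\{\mathcal{L}:\mathcal{L}-\mathcal{L}'\notin\bigcup_IF_I\}$ is not a definition.
\item A window modelled on the zonotope $\sum_\rho[0,1)D_\rho$ is not the Borisov--Hua collection, and it is not shown to be strong here.
\item ``Shrinking'' a window to gain vanishing will in general destroy fullness.
\end{itemize}
Instead take, as in Proposition \ref{Prop:ExcColln}, the line bundles in $p+\Delta$ with $\Delta=\{|f|\le\frac12,\ |\alpha|\le\frac c2\}$ and $p$ generic. Differences then lie in $\{|f|<1,\ |\alpha|<c\}$. Twisting by $-jK$ keeps $\alpha$ fixed and raises $f$, so all three forbidden cones are avoided for every $j\ge 0$.

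In the $n+1$ vertex case the check is one line, but you should state it: differences have $f>-1$, and the only relevant cone is $\{f\le -1\}$. One small slip: the fan of $\tot\omega_{X_\Sigma}$ also contains the ray $(0,1)$, though its support is still $\sigma$.
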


An immediate consequence of Theorem \ref{Thm:FaceOfReflexiveTORICNCCR}, Theorem \ref{Thm:FanoAlmostSimplicial} and Proposition \ref{Prop:FaceOfReflexive} is the following short re-proof of the fact that all simplicial Gorenstein cones admit toric NCCRs. The existence of such NCCRs for all simplicial cones has been proven in \cite{FMS19}.

\begin{corollary}
    \label{Cor:SimplicialHasNCCR}
    Let $\sigma\subset N_{\R}$ be a simplicial Gorenstein cone. Then $R=k[\sigma^\vee\cap M]$ has an NCCR.
\end{corollary}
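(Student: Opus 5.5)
The plan is to combine Theorem \ref{Thm:FaceOfReflexiveTORICNCCR} with Theorem \ref{Thm:FanoAlmostSimplicial}. Since $\sigma$ is Gorenstein, after a change of basis we can write $\sigma=\cone(P\times\{1\})$ for a lattice polytope $P\subset\R^d$, where $d=\dim\sigma-1$. Because $\sigma$ is simplicial, $P$ is a lattice simplex with $d+1$ vertices. It therefore suffices to find a simplicial reflexive polytope $Q\subset\R^n$ with at most $n+2$ vertices such that $P$ is lattice equivalent to a face of $Q$. Theorem \ref{Thm:FanoAlmostSimplicial} then gives a toric NCCR of $k[\cone(Q\times\{1\})^\vee\cap M']$. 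Theorem \ref{Thm:FaceOfReflexiveTORICNCCR}, with $P$ in the role of the polytope there called $Q$ and our $Q$ in the role of the ambient polytope, then descends this to a toric NCCR of $R$. A toric NCCR is in particular an NCCR.

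The key step is the embedding. Proposition \ref{Prop:FaceOfReflexive} alone does not suffice, because it gives no control over simpliciality or the number of vertices. I would therefore aim for the stronger claim that \emph{every lattice simplex is lattice equivalent to a face of a reflexive simplex}. A reflexive simplex $Q\subset\R^n$ has $n+1\le n+2$ vertices and is trivially simplicial, so it fits the hypotheses of Theorem \ref{Thm:FanoAlmostSimplicial}.

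To construct such a simplex, I would first inspect the proof of \cite[Proposition 2.2]{HM06}. That construction proceeds inductively, at each step adjoining one new dimension and a small number of new vertices placed opposite the existing polytope so that every facet acquires lattice distance one from the origin. I expect that when it starts from a simplex, the new vertices can be chosen to keep the polytope a simplex, that is, one new vertex per added dimension, with $P$ kept as a face.

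Concretely, I would try the following. Translate $P$ so that it has the form $\conv(v_0,\dots,v_d)$. Embed it as $P\times\{0\}^{m}\subset\R^{d+m}$, and add $m$ apex vertices of the form $w_j=-a_j e_{d+j}-c_j$, where the vectors $c_j$ and integers $a_j$ are chosen so that the origin lies in the interior of the resulting simplex. The vertices are then adjusted so that each facet inequality $\langle u_F,x\rangle\ge -1$ has an integral normal $u_F$. For a simplex this condition is a finite system of integrality conditions on the barycentric data: the origin must have a representation $\sum\lambda_i w_i=0$ whose associated dual vertices are lattice points. These conditions can be satisfied by scaling the apex coordinates by a common multiple of the denominators appearing in the facet normals of $P$.

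The main obstacle is exactly this reflexivity check. One must verify that all facet normals become primitive integral vectors at height $-1$, not merely rational ones, while keeping $P$ as a genuine face. If a simplex cannot be arranged directly, the fallback is to allow one extra vertex, producing an almost simplicial polytope with $n+2$ vertices. In that case one must still ensure the result is simplicial, which I would handle by choosing the extra vertex in general position relative to the facets.
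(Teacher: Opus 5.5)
Your reduction matches the paper's: write $\sigma=\cone(P\times\{1\})$ with $P$ a lattice simplex, realise $P$ as a face of a simplicial reflexive polytope with at most $n+2$ vertices, apply Theorem \ref{Thm:FanoAlmostSimplicial}, and descend with Theorem \ref{Thm:FaceOfReflexiveTORICNCCR}. You are also right that Proposition \ref{Prop:FaceOfReflexive} as stated is not enough. The gap is that the one step carrying the content, the existence of such a $Q$, is left as an expectation. The paper supplies the missing idea. The construction in the proof of \cite[Proposition 2.2]{HM06} is an iterated wedge over facets. A wedge of a $d$-simplex over a facet $F$ keeps the $d$ vertices of $F$ and doubles the single vertex off $F$. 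It therefore has $d+2$ vertices in dimension $d+1$, so it is again a simplex. Iterating, the reflexive polytope produced is a reflexive simplex, hence simplicial with $n+1\le n+2$ vertices, and Theorem \ref{Thm:FanoAlmostSimplicial} applies.

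Your explicit substitute construction cannot work as written. Since $P\subset\R^d$ is full-dimensional, $P\times\{0\}^m$ has affine hull $\R^d\times\{0\}$, which contains the origin. But if $0$ is interior to a polytope, every proper face lies in a supporting hyperplane $\{\langle u,x\rangle=c\}$ with $c\neq 0$. So $P\times\{0\}^m$ can never be a face of a reflexive polytope. $P$ has to be placed at an affine height, e.g.\ $P\times\{-1\}$, as in \S\ref{sec:Applications}. Beyond that, you give no argument that rescaling the apexes by a common denominator places every facet at lattice distance exactly one while keeping all vertices integral, which is what reflexivity requires. Your one-extra-vertex fallback ``in general position'' likewise addresses neither integrality nor reflexivity.
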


\begin{proof}
    Write $\sigma$ as $\cone(P\times\{1\})$ for a simplicial polytope $P$. By Proposition \ref{Prop:FaceOfReflexive} there exists a reflexive polytope $Q$ with a face that is lattice equivalent to $P$. Thus, it is sufficient to prove that for $\sigma'=\cone(Q\times\{1\})$, there is an NCCR of $k[(\sigma')^\vee\cap M']$, where $M'$ is the corresponding character lattice of the affine toric variety $X_{\sigma'}$. The proof of Proposition \ref{Prop:FaceOfReflexive} constructs $Q$ from $P$ by repeated application of the wedge construction over a codimension 1 facet. Since $P$ is simplicial, the number of vertices at each step increases by 1 and so $Q$ is simplicial (see cf. \cite[Chapter 15]{HandbookDiscrete}). Then $k[(\sigma')^\vee\cap M']$ admits a toric NCCR by Theorem \ref{Thm:FanoAlmostSimplicial}, and Theorem \ref{Thm:FaceOfReflexiveTORICNCCR} guarantees the existence of an NCCR of $R$, as desired.
\end{proof}

Corollary \ref{Cor:SimplicialHasNCCR} also follows as a consequence of the work by Ballard et al. \cite{BBB+} and can be considered well-established in the literature. The next case to be considered is that of toric varieties associated to cones $\sigma$ with Picard rank 1, i.e. where the cone has $\dim\sigma+1$ extremal rays. We refer to such cones as \newterm{almost simplicial}. 
\begin{definition}
    \label{Def:AlmSim}
    We say that a cone $\sigma$ is \newterm{almost simplicial} if $|\sigma(1)|=\dim \sigma+1$. 
\end{definition}

\noindent Using Theorem \ref{Thm:FaceOfReflexiveTORICNCCR}, we will prove the following result.
\begin{theorem}
    \label{Thm:AlmSim}
    Let $\sigma\subset (N\oplus \Z)_\R$ be an almost simplicial Gorenstein cone, i.e. $\sigma=\cone(P\times\{1\})$ with $P\subset N_\R$ a lattice polytope with $\dim P+2$ vertices. Then the toric algebra $R=k[\sigma^\vee\cap (M\oplus \Z)]$ associated to the cone $\sigma$ has a toric NCCR.
\end{theorem}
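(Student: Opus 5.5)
The plan follows the strategy sketched in the introduction. We embed $P$ as a face (in fact a facet) of a polytope $Q$ one dimension higher. We choose $Q$ so that a simplicial subdivision of the face fan of $Q$ gives a smooth toric DM stack carrying a tilting bundle that satisfies the cohomological vanishing of Theorem \ref{Thm:NovaDMstack}. This yields a toric NCCR for $k[\cone(Q\times\{1\})^\vee\cap (M\oplus\Z^2)]$, which then descends to $R$ by Theorem \ref{Thm:FaceOfReflexiveTORICNCCR}.

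\textbf{Step 1: structure of $P$.} Since $P$ has $\dim P+2=d+2$ vertices $v_0,\dots,v_{d+1}$, there is a unique (up to scaling) affine relation $\sum_i a_i v_i=0$ with $\sum a_i=0$. We split the indices into $I_+=\{a_i>0\}$, $I_-=\{a_i<0\}$ and $I_0$. Vertices in $I_0$ are cone points, so $P$ is an iterated pyramid over a circuit polytope. Since $P$ is a polytope with these as vertices, $|I_+|,|I_-|\ge 2$.

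\textbf{Step 2: construct $Q$.} Put $P$ at height $0$ in $N_\R\oplus\R$ and add one extra lattice point $w$ at height $-1$, for instance $w=(c,-1)$ with $c$ chosen in the relative interior of $P$ (after replacing $P$ by a translate if needed). Then $Q=\conv(P\times\{0\},w)$ is a pyramid over $P$, and $P$ is a facet of $Q$. Better is to arrange, by a lattice-preserving shear and choice of $c$, that the origin lies in the interior of $Q$. The face fan $\Sigma_Q$ then has rays through the $d+3$ vertices of $Q$, so Picard rank $2$. We refine it simplicially by a regular triangulation of $\partial Q$ using only vertices, taking either of the two triangulations of the circuit. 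This gives a complete simplicial fan $\Sigma$ with $d+3$ rays in dimension $d+1$. It is a Picard rank $2$ fan, and its primitive collections are exactly $I_+$ (or $I_-$, depending on the triangulation) and the set of pyramid directions $\{w\}\cup I_0$-type collections. Its cohomology is then controlled entirely by Proposition \ref{Prop:CohoCech}, Corollary \ref{Cor:AcyclicForbidden} and Lemma \ref{Lem:RedHomUnionofPrim}, with only a handful of forbidden cones in $\operatorname{Pic}_\R\cong\R^2$.

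\textbf{Step 3: tilting bundle.} Following Borisov--Hua \cite{BH09}, we take $\mathcal{T}$ to be the sum of line bundles $\O(\sum r_\rho D_\rho)$ whose classes lie in the region $\{\sum r_\rho D_\rho : -1< r_\rho\le 0\}$, or a suitable shift of it in the rank-$2$ Picard group. Generation holds by the Borisov--Hua argument, since the Koszul complexes associated to the primitive collections generate. The vanishing $\Ext^{>0}(\mathcal{T},\mathcal{T})=0$ amounts to checking that the differences of the chosen classes avoid every forbidden cone $F_I$ with $I\neq\emptyset$.

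\textbf{Step 4: main obstacle.} The key step is the stronger hypothesis of Theorem \ref{Thm:NovaDMstack}: $H^{>0}(\mathcal{T}^\vee\otimes\mathcal{T}\otimes\O(-j K))=0$ for all $j\ge0$. Here $-K=\sum_\rho D_\rho$, and $V=\omega$ gives $\operatorname{Sym}^\bullet V^\vee=\bigoplus_j \O(-jK)$. In $\operatorname{Pic}_\R\cong\R^2$ one must verify that the translates $\mathcal{L}-\mathcal{L}'+j\sum D_\rho$ never enter a forbidden cone. Each $F_I$ is a translated cone whose apex is $q_I=-\sum_{\rho\notin I}D_\rho$. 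Since $\sum_\rho D_\rho$ is ample, or at least lies in the interior of the effective cone once $0\in\operatorname{int}Q$, shifting by $j(-K)$ moves everything away from the negative forbidden cones. The delicate cases are the mixed cones $F_{I_+}$ and $F_{I_-}$. My plan is to handle these by a direct two-coordinate computation, writing classes via the linear relation from Step 1 together with the pyramid relation for $w$. If this inequality check fails for the naive window, I would adjust the choice of $c$ (the position of $w$) to make the relation coefficients balanced, which puts $0$ suitably interior in $Q$. With vanishing established, Theorem \ref{Thm:NovaDMstack} gives a (toric, since $\mathcal{T}$ is a sum of line bundles) NCCR of $k[\cone(Q\times\{1\})^\vee\cap(M\oplus\Z^2)]$. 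Since $P$ is a face of $Q$, Theorem \ref{Thm:FaceOfReflexiveTORICNCCR} yields a toric NCCR of $R$.
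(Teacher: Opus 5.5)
Your overall route is the paper's: a pyramid $Q$ over $P$, a Picard rank $2$ simplicial refinement of its face fan, a Borisov--Hua window of line bundles, Theorem \ref{Thm:NovaDMstack}, and then descent via Theorem \ref{Thm:FaceOfReflexiveTORICNCCR}. Your circuit bookkeeping with $I_+,I_-,I_0$ is fine, but Steps 2--4 do not work as written.

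In Step 2, $P\times\{0\}$ sits at height $0$ and $w$ at height $-1$, so $Q$ has lattice width $1$ in the last coordinate. Every lattice point of $Q$ is therefore $w$ or lies in the facet $P\times\{0\}$. Hence no lattice-preserving shear or choice of $c$ can put $0$ in $\operatorname{int}Q$; already every linear relation among the vertices has zero $w$-coefficient, so the vertices never positively span. The apex must sit at lattice distance at least $2$ from $P$, beyond the origin. For example, put $P$ at height $-1$ (which also makes its vertices primitive) and take $w$ primitive in $-\operatorname{int}\cone(P\times\{-1\})$. This is why the paper places $P$ at height $-1$ and the apex $-e_1+k_0e_{n+1}$ at positive height.

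Your list of primitive collections is also wrong. A complete simplicial fan in dimension $m$ with $m+2$ rays has exactly two primitive collections, and they partition the rays. For the triangulation of $P$ by the simplices $V\setminus\{i\}$, $i\in I_+$, they are $I_+$ and $I_-\cup I_0\cup\{w\}$ (symmetrically for the other triangulation; compare Lemma \ref{Lem:AlmSimPrim}). The set $\{w\}\cup I_0$ is in fact a cone of $\Sigma$.

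The substantive gap is in Steps 3 and 4. Let $\alpha$ be the functional on $\operatorname{Pic}_\R(\mathcal{X}_\Sigma)$ given by the lifted affine relation, $\alpha(D_\rho)=a_\rho$ with $\alpha(D_w)=0$, and put $A=\sum_{I_+}a_\rho$. The forbidden cones of the two primitive collections lie in $\{\alpha\ge A\}$ and $\{\alpha\le -A\}$. Your window $\{\sum r_\rho D_\rho:-1<r_\rho\le 0\}$ has $\alpha$-values filling $(-A,A)$, and no shift changes this. So differences of its classes reach those half-planes, and the collection is in general not strong exceptional.

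For example, take $P=[0,1]^2$ at height $-1$, apex $(-1,-1,2)$, and the triangulation by the diagonal through $(1,0),(0,1)$. This gives a smooth threefold with $\operatorname{Pic}=\Z u\oplus\Z v$, where $D_{(0,0)}=D_{(1,1)}=u$, $D_{(1,0)}=D_{(0,1)}=v$ and $D_w=u+v$. Your window contains $7>6=\operatorname{rk}K_0$ line bundles. Among them are $\O(-v)$ and $\O(-2u-v)$, with $\Ext^1(\O(-v),\O(-2u-v))=H^1(\O(-2u))\neq0$. A window of half the height is needed, namely the parallelogram $|f|\le\frac12$, $|\alpha|\le\frac A2$ of Proposition \ref{Prop:ExcColln}.

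Step 4 is then only a plan, with two problems:
\begin{itemize}
\item Positivity of $-K$ controls only $F_\emptyset$. Moreover $-K$ is not ample: $\alpha(-K)=0$ places it on the wall $\R_{\ge0}D_w$, since $\Sigma$ properly refines the face fan of the non-simplicial $Q$.
\item Moving $c$ cannot help, because the apex position does not affect $\alpha$ at all.
\end{itemize}
The missing observation is $\alpha(-K)=\sum_\rho a_\rho=0$. Twisting by $\O(-jK)$ therefore moves classes along lines of constant $\alpha$, parallel to the boundaries of the two mixed forbidden cones, while increasing $f$ away from $F_\emptyset$. With the parallelogram window this gives the vanishing required by Theorem \ref{Thm:NovaDMstack} at once.
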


This result has also recently been proven by Tomonaga \cite{Tomo25}. Tomonaga's proof uses a partial order on upper sets, and is algebraic in nature. The proof we will give below resembles the proof we give of Corollary \ref{Cor:SimplicialHasNCCR} and is thus more geometrically intuitive. 
Let us first outline the structure of the proof to Theorem \ref{Thm:AlmSim}. Given a polytope $P$ with $\dim P+2$ vertices, we first note that if $P$ is a simplicial polytope (i.e. all its faces are simplices) then an NCCR exists by Theorem \ref{Thm:FanoAlmostSimplicial}. Otherwise, we construct a primitive polytope $Q$ such that $0\in \operatorname{Int}(Q)$ and $P$ is a face of $Q$. It then suffices to provide the existence of a toric NCCR for the toric algebra associated to $\cone(Q\times\{1\})$. The given construction of $Q$ lends itself to an explicit simplicial subdivision $\Sigma$ of its face fan for which we find the primitive collections of rays, and thus the forbidden cones. By modifying the arguments in \cite[\S 5]{BH09}, we obtain a tilting bundle on $\mathcal{X}_\Sigma$ fulfilling the required cohomology vanishing condition of Theorem \ref{Thm:NovaDMstack}, and thus we have a toric NCCR as desired.

Fix now a lattice polytope $P\subset N_\R$ with $\dim P+2$ vertices. We begin by constructing an appopriate polytope $Q\subset (N\oplus \Z)_\R$ containing a face lattice equivalent to $P\subset N_\R$. Let $n=\dim N$ and consider a standard basis $e_1,\dots, e_n$ for $N_\R$ that we extend via $e_{n+1}$ to a basis of $(N\oplus \Z)_\R$. Note that there exists a regular triangulation of $P$ obtained by introducing a hyperplane. This hyperplane $H$ will separate two vertices, which we denote by $w_1$ and $w_2$, with the other vertices spanning the hyperplane denoted by $w_3,\dots,w_{n+2}$. Now place the polytope $P$ at height $-1$ in $(N\oplus \Z)_\R$ such that the vertex $w_1$ lies at $-e_{n+1}$.\\

Since $P$ is convex and $w_1$ a vertex, either $e_1-e_{n+1}$ or $-e_1-e_{n+1}$ is not in the polytope, and we may without loss of generality assume $-e_1-e_{n+1}$ is not in the polytope. For each vertex $w_i$ of $P\subset N_\R$, we denote the corresponding vertex of the placed polytope lying at height $-1$ by $v_i$. 
There exists a minimal positive integer $k_0\in \Z_{>0}$ such that the origin and $v_1$ lie on the same side (and not inside) of the hyperplane spanned by  $\{v_i\mid 3\le i\le n+2\}\cup \{-e_1+k_0 e_{n+1}\}$. Consider now the polytope $Q$ given by the convex hull of $v_1,\dots, v_{n+2}$ and $v_{n+3}=-e_1+k_0e_{n+1}$. The faces of $Q$ are given by $\{F\times \{-1\}\mid F\preceq P\}\cup\{\conv(F, v_{n+3})\mid F\prec P\}$. The polytope $Q$ is primitive, since all its vertices are primitive lattice elements.\\

Let $\Sigma'$ be the face fan of $Q$. It is non-simplicial, but introducing the cone $\cone(v_3,\dots,v_{n+2})$ simplicially subdivides it (since introducing $\conv(w_3,\dots,w_{n+2})$ gives a regular triangulation of $P$). Denote this simplicial fan by $\Sigma$. Let $\rho_i$ be the ray with primitive generator $v_i$. Then we can list the maximal cones (of which there are $2n+2$) of $\Sigma$ via their set of extremal rays: \begin{itemize}
    \item $\{\rho_1,\rho_3,\dots,\rho_{n+2}\}$;
    \item $\{\rho_2,\rho_3,\dots,\rho_{n+2}\}$;
    \item $\{\rho_1,\rho_3,\dots,\rho_{n+3}\}\setminus\{\rho_i\}$ for $3\le i\le n+2$;
    \item $\{\rho_2,\dots,\rho_{n+3}\}\setminus \{\rho_i\}$ for $3\le i\le n+2$.
\end{itemize}
Note here that $\{\rho_3,\dots,\rho_{n+3}\}$ does not give a cone in $\Sigma$ as the hyperplane $H$ separates $w_1$ and $w_2$ and so the ray $\rho_2$ star subdivides the cone these  $n+1$ rays would span. The figure \ref{fig:Q} illustrates on an example the polytope $Q$ and the fan $\Sigma$.\footnote{And the authors acknowledge the help of AI based language models in generating this figure.}

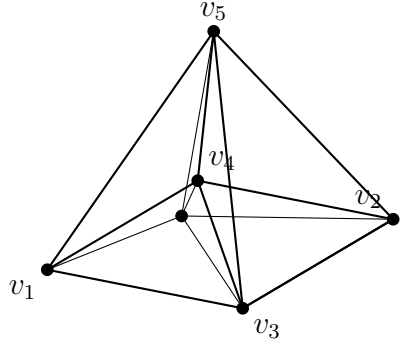
\begin{figure}[ht!]
\begin{tikzpicture}[tdplot_main_coords, scale=2]
\coordinate (v4) at (0,0,0);
\coordinate (v1) at (2,0,0);
\coordinate (v3) at (2,1.5,0);
\coordinate (v2) at (0,1.5,0);
\coordinate (v5) at (1,0.7,1.5);
\coordinate (p) at (1.6,0.8,0.4);
\draw[thick]
  (v1) -- (v3) -- (v2) -- (v4) -- cycle;
\draw[thick] (v3) -- (v4)
             (v2) -- (v3);
\draw[thick]
  (v5) -- (v1)
  (v5) -- (v2)
  (v5) -- (v3)
  (v5) -- (v4);
\draw
  (p) -- (v1)
  (p) -- (v2)
  (p) -- (v3)
  (p) -- (v4)
  (p) -- (v5);
\fill (v1) circle (1.2pt) node[below left] {$v_1$};
\fill (v3) circle (1.2pt) node[below right] {$v_3$};
\fill (v4) circle (1.2pt) node[above right] {$v_4$};
\fill (v2) circle (1.2pt) node[above left] {$v_2$};
\fill (v5) circle (1.2pt) node[above] {$v_5$};
\fill (p) circle (1.2pt);
\end{tikzpicture}
\caption{Example of a polytope $Q$ with fan $\Sigma$ for $P=\conv(v_1,v_2,v_3,v_4)$.}
\label{fig:Q}
\end{figure}

We now aim to describe the forbidden cones for $\Sigma$, so that we may classify the line bundles with vanishing higher cohomology. Using Lemma \ref{Lem:RedHomUnionofPrim}, we thus need to classify the primitive collections $I\subseteq \Sigma(1)$.

\begin{lemma}
    \label{Lem:AlmSimPrim}
    Let $\Sigma$ be as above. Then there exist precisely two primitive collections $I\subset \Sigma(1)$: $I_+:=\{\rho_1,\rho_2\}$ and its complement $I_-:=\{\rho_1,\rho_2\}^c$. Thus, there are exactly three proper subsets of $\Sigma(1)$ giving forbidden cones: $\emptyset, I_-$ and $ I_+$.
\end{lemma}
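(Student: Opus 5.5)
The plan is to read the primitive collections directly off the list of the $2n+2$ maximal cones of $\Sigma$. A subset $J\subseteq\Sigma(1)$ spans a cone of $\Sigma$ exactly when it is contained in the ray set of some maximal cone. So a primitive collection is a minimal subset of $\Sigma(1)=\{\rho_1,\dots,\rho_{n+3}\}$ that is contained in none of the listed maximal ray sets. We test the candidates in order of how many rays they omit.

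First, $I_+=\{\rho_1,\rho_2\}$. No maximal cone contains both $\rho_1$ and $\rho_2$, while each singleton is a ray of $\Sigma$; hence $I_+$ is primitive. Next, $I_-=\{\rho_3,\dots,\rho_{n+3}\}$. The remark after the list of cones shows $I_-$ is not a cone. Any proper subset omits some $\rho_i$; if $3\le i\le n+2$ it lies in $\{\rho_1,\rho_3,\dots,\rho_{n+3}\}\setminus\{\rho_i\}$, and if $i=n+3$ it lies in $\{\rho_1,\rho_3,\dots,\rho_{n+2}\}$. So $I_-$ is primitive. Conversely, let $I$ be any non-face. If $I$ contains both $\rho_1$ and $\rho_2$, then $I\supseteq I_+$. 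Otherwise $I$ misses $\rho_1$ or $\rho_2$, say $\rho_2$, so $I\subseteq\{\rho_1,\rho_3,\dots,\rho_{n+3}\}$. The subsets of this set that are not faces are exactly those containing all of $\rho_3,\dots,\rho_{n+3}$: if some $\rho_i$ with $3\le i\le n+3$ is missing, the cases above exhibit a maximal cone containing $I$. Hence $I\supseteq I_-$. Minimality then forces every primitive collection to equal $I_+$ or $I_-$.

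For the second sentence, apply Lemma \ref{Lem:RedHomUnionofPrim}. Any nonempty $I$ with $\tilde H_\bullet(|C_I|)\neq0$ is a union of primitive collections, so it is one of $I_+$, $I_-$, or $I_+\cup I_-=\Sigma(1)$. The proper subsets are therefore among $I_+$ and $I_-$, together with $\emptyset$, which we include via the convention $\tilde H_{-1}(\emptyset)=k$. It remains to check that $I_\pm$ really have nonzero homology. $C_{I_+}$ consists of two points, so $\tilde H_0\neq0$. $C_{I_-}$ is the boundary of the $n$-simplex on $\rho_3,\dots,\rho_{n+3}$: every proper subset is a face but the whole set is not. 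Hence $|C_{I_-}|\simeq S^{n-1}$ and $\tilde H_{n-1}\neq0$. Finally, $\Sigma(1)$ itself gives $S^{n}$, since $\Sigma$ is a complete fan in dimension $n+1$, but it is not a proper subset.

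The main obstacle is the case analysis behind ``if some $\rho_i$ is missing there is a maximal cone containing $I$''. This relies on the list of maximal cones being correct, in particular on $\{\rho_1,\rho_3,\dots,\rho_{n+3}\}\setminus\{\rho_i\}$ really being cones of $\Sigma$. I will take that list as established in the construction and only verify the containments above.
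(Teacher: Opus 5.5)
Your proposal is correct and follows essentially the paper's argument. The paper also reads the primitivity of $I_\pm$ off the list of maximal cones and rules out other primitive collections by the same containment check, split into cases by whether $I$ contains both, neither, or exactly one of $\rho_1,\rho_2$; your explicit justification of the second sentence via Lemma \ref{Lem:RedHomUnionofPrim} and the computations $|C_{I_+}|=S^0$, $|C_{I_-}|\simeq S^{n-1}$ fills in what the paper leaves to the word ``Thus''.
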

\begin{proof}
    Using the above list of maximal cones, it is immediately clear that these two sets are primitive collections, so it suffices to show no others exist. Since $\{\rho_1,\rho_2\}$ is primitive, no $I\subset \Sigma(1)$ containing both $\rho_1,\rho_2$ can be primitive. Similarly, if neither is in the set $I$, then it is not primitive unless it is the full set $\{\rho_1,\rho_2\}^c$. Hence, we are left with showing that no primitive collection can contain exactly one of $\rho_1, \rho_2$. Suppose $\rho_1\in I\subset\Sigma(1)$ and $I$ is a primitive collection. Then $\rho_2\not\in I$ and so $I=\{\rho_1\}\cup J$, $J\subsetneq\{\rho_1,\rho_2\}^c$ (the inequality follows as $\{\rho_1,\rho_2\}^c$ is already primitive). But for any $J\subsetneq \{\rho_1,\rho_2\}^c$, the set $I=\{\rho_1\}\cup J$ determines a cone in $\Sigma$ and so the set $I$ is not a primitive collection. Similar argumentation shows that no primitive collection $I$ contains $\rho_2$ but not $\rho_1$ and therefore the primitive collections are exactly $I_+$ and its complement $I_-$.
\end{proof}

Next, we will construct a full, strong exceptional collection $\mathcal{S}$ of line bundles on $\mathcal{X}_\Sigma$. This construction is only a minor modification of the construction in \cite[\S 5]{BH09}.

\begin{lemma}
    \label{Lem:collectionsrandalpha}
    There exists a collection of positive numbers $r_i$ such that $\sum_{i=1}^{n+3}r_i v_i=0$ and $\sum_{i=1}^{n+3}r_i=1$. There also exists a, unique up to scaling, collection of rational numbers $\alpha_i$ such that $\sum_{i=1}^{n+3}\alpha_i v_i=0$ and $\sum_{i=1}^{n+3}\alpha_i=0$. Furthermore, $\alpha_{n+3}=0$. 
\end{lemma}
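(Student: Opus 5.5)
We argue directly from the geometry of $Q$. For the first claim (the positive $r_i$), we use that the origin lies in the interior of $Q$. By the choice of $k_0$, the origin and $v_1$ lie strictly on the same side of the hyperplane through $v_3,\dots,v_{n+2},v_{n+3}$. The origin also lies strictly above the height $-1$ facet $P\times\{-1\}$, since $k_0>0$ makes $v_{n+3}$ lie at positive height. To place $0$ strictly inside every facet $\conv(F,v_{n+3})$ with $F$ a facet of $P$, we pick $v_1$ (equivalently the translation of $P$) so that the vertical line through the origin meets $P\times\{-1\}$ in its relative interior. If necessary we perturb the placement, replacing $-e_1$ by a suitable lattice direction; the paper's standing assumption that $0\in\operatorname{Int}(Q)$ is what we invoke here. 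Once $0\in\operatorname{Int}(Q)$, the origin is a convex combination of all vertices with strictly positive weights. To see this, take any convex combination expressing $0$ and average it with $\epsilon$ times the barycentric combination of all vertices minus $\epsilon$ times the barycentre, where the barycentre itself is a small interior perturbation. This gives $r_i>0$ with $\sum r_i=1$ and $\sum r_iv_i=0$.

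For the $\alpha_i$, we look at the linear map $\R^{n+3}\to\R^{n+1}\oplus\R$ given by $(\alpha_i)\mapsto(\sum\alpha_iv_i,\sum\alpha_i)$. Its kernel consists of the affine dependencies among the $n+3$ points $v_i$ in $\R^{n+1}$. These points affinely span $\R^{n+1}$, because $P\times\{-1\}$ spans an $n$-dimensional affine hyperplane and $v_{n+3}$ lies off it. So the affine rank is $n+2$, and the space of affine dependencies has dimension $(n+3)-(n+2)=1$. Since the $v_i$ are lattice points, this kernel is defined over $\Q$, which gives a unique-up-to-scaling rational solution $\alpha$.

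To see that $\alpha_{n+3}=0$, take the last coordinate of $\sum\alpha_iv_i=0$. Every $v_i$ with $i\le n+2$ has height $-1$ and $v_{n+3}$ has height $k_0$, so
\[
-\sum_{i\le n+2}\alpha_i+k_0\alpha_{n+3}=0 .
\]
Combined with $\sum_{i\le n+2}\alpha_i=-\alpha_{n+3}$, this gives $(1+k_0)\alpha_{n+3}=0$, hence $\alpha_{n+3}=0$. Equivalently, $\alpha$ restricts to the unique affine dependency among the $n+2$ vertices of $P$, which is nontrivial because $P$ has $\dim P+2$ vertices.

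The main obstacle is the positivity of all the $r_i$, that is, justifying $0\in\operatorname{Int}(Q)$ from the construction. Everything else is linear algebra.
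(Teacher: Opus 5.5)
Your proof is correct once $0\in\operatorname{Int}(Q)$ is granted, and the paper makes the same assumption: its proof simply opens with ``$0$ is in the interior of $Q$''. For the $r_i$ you follow the paper's argument. For the $\alpha_i$ you take a different and shorter route.

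\emph{The paper's route.} It counts \emph{linear} relations among the $v_i$. These form a $2$-dimensional space, since the $v_i$ span $N\otimes\Q$. The positive relation $r$ shows that the condition $\sum\alpha_i=0$ cuts this space down to a line. The paper then gets $\alpha_{n+3}=0$ by writing an explicit dependency supported on $v_1,\dots,v_{n+2}$: the point $z$ where the segment $w_1w_2$ meets $\conv(w_3,\dots,w_{n+2})$, expressed in two ways. Uniqueness does the rest.

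\emph{Your route.} You count affine dependencies directly, using only that $P\times\{-1\}$ spans the height $-1$ hyperplane and $v_{n+3}$ lies off it. So your uniqueness statement needs neither the $r_i$ nor $0\in\operatorname{Int}(Q)$. Moreover, $\alpha_{n+3}=0$ falls out of the last coordinate as $(1+k_0)\alpha_{n+3}=0$, without any appeal to the triangulation.

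\emph{What the paper's detour buys.} The construction through $z$ is also meant to give the sign pattern $\alpha_1,\alpha_2>0$ and $\alpha_3,\dots,\alpha_{n+2}<0$. This is not part of the statement. However, the subsequent construction of $\Delta$ (normalised by $\alpha_1+\alpha_2$) and the placement of the forbidden points $q_{I_\pm}$ rely on it. Your argument does not supply it, so you would need to add that step if you go on to Proposition~\ref{Prop:ExcColln}.

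Two smaller points. First, drop your attempted derivation of $0\in\operatorname{Int}(Q)$ rather than keeping it next to the appeal to the standing assumption. In the paper's placement $v_1=-e_{n+1}$, so the vertical line through $0$ meets $P\times\{-1\}$ at a vertex. In any case the vertical line is the wrong test, because the apex $v_{n+3}=-e_1+k_0e_{n+1}$ is off the axis. For this pyramid the correct criterion is that the line through $v_{n+3}$ and $0$ meets the relative interior of $P\times\{-1\}$, i.e.\ $w_1+\tfrac{1}{k_0}e_1\in\operatorname{int}P$. This does not follow automatically from the paper's choice of $k_0$, so you are right that it is the delicate point; the paper asserts it rather than deriving it. Second, your positivity step is garbled as written. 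State it as follows: let $b=\tfrac{1}{n+3}\sum_i v_i$, take $\epsilon>0$ small enough that $-\epsilon b\in Q$, and write $-\epsilon b=\sum_i\lambda_i v_i$ as a convex combination. Then $r_i=\tfrac{1}{1+\epsilon}\bigl(\lambda_i+\tfrac{\epsilon}{n+3}\bigr)>0$ satisfies $\sum_i r_i v_i=0$ and $\sum_i r_i=1$.
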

\begin{proof}
Begin by noting that $0$ is in the interior of $Q$, implying the existence of the collection $r_i$ directly. 
To find the collection $\alpha_i$, note that the fan $\Sigma$ is complete and its primitive vertices $v_i$ generate $N\otimes \Q$, and so the space of linear relations has dimension two. The collection of $r_i$ above has $\sum_{i=1}^{n+3}r_i>0$, so the condition $\sum_{i=1}^{n+3}\alpha_i=0$ cuts out a dimension one subspace of the linear relations. We claim now that we can pick this collection such that $\alpha_1,\alpha_2>0$, $\alpha_3,\dots,\alpha_{n+2}<0$ and $\alpha_{n+3}=0$. Note that the line connecting $w_1$ and $w_2$ intersects the hyperplane $H$ in a unique point $z\in \operatorname{Relint}(H)$.

Thus, there are positive coefficients $\alpha_1, \alpha_2>0$ with $\alpha_1w_1+\alpha_2 w_2=z$ and $\alpha_1+\alpha_2=1$ as well as coefficients $\beta_i>0$ such that $\sum_{i=3}^{n+2}\beta_i w_i=z$ and $\sum_{i=3}^{n+2}\beta_i=1$. The image $\tilde{z}$ of the point $z$ in the face of $Q$ obtained by placing $P$ has the same property, i.e. $\tilde{z}=\alpha_1 v_1+\alpha_2 v_2=\sum_{i=3}^{n+2}\beta_i v_i$. Write $\alpha_i=-\beta_i$ for $3\le i\le n+2$ to obtain $\sum_{i=1}^{n+2}\alpha_i v_i=0$ and $\sum_{i=1}^{n+2}\alpha_i=0$. Adding $\alpha_{n+3}=0$ we obtain a collection of $\alpha_i$ with $\sum_{i=1}^{n+3}\alpha_i v_i=0$ and $\sum_{i=1}^{n+3}\alpha_i=0$, and so by uniqueness up to scaling our claim holds.
\end{proof}

Define on $\operatorname{Pic}_\R(\mathcal{X}_{\Sigma})$ the linear functions $f$ and $\alpha$ by $f(D_{\rho_i})=r_i$ and $\alpha(D_{\rho_i})=\alpha_i$. Consider the parallelogram $\Delta\subset \operatorname{Pic}_\R(\mathcal{X}_\Sigma)$ given by the inequalities $|f(x)|\le \frac{1}{2}$ and $|\alpha(x)|\le \frac{1}{2}(\alpha_1+\alpha_2)$. Let $p$ be a generic point $p\in \operatorname{Pic}_\R(\mathcal{X}_\Sigma)$ such that no points from $\operatorname{Pic}_\Q(\mathcal{X}_\Sigma)$ lie on the sides of $p+\Delta$.

\begin{proposition}
    \label{Prop:ExcColln}
    Let $\mathcal{S}$ consist of all those line bundles such that their image in $\operatorname{Pic}_\R(\mathcal{X}_\Sigma)$ lies inside this parallelogram $p+\Delta$. Then $\mathcal{S}$ forms a full strong exceptional collection on $\mathcal{X}_\Sigma$.
\end{proposition}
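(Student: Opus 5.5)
We follow the strategy of \cite[\S 5]{BH09}, replacing their primitive-collection analysis with Lemma \ref{Lem:AlmSimPrim}. Three things must be shown: $\mathcal{S}$ is strong, $\mathcal{S}$ is exceptional and can be ordered as an exceptional collection, and $\mathcal{S}$ generates $\operatorname{D}^{\operatorname{b}}(\mathcal{X}_\Sigma)$.

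\textbf{Strongness and vanishing.} For $\mathcal{L}_1,\mathcal{L}_2\in\mathcal{S}$ we have $\Ext^i(\mathcal{L}_1,\mathcal{L}_2)=H^i(\mathcal{L}_2\otimes\mathcal{L}_1^{-1})$. The class $x=\mathcal{L}_2-\mathcal{L}_1$ lies in the difference set $\Delta-\Delta$, which is the open parallelogram $|f|<1$, $|\alpha|<\alpha_1+\alpha_2$. The genericity of $p$ makes all inequalities strict. By Proposition \ref{Prop:CohoCech}, Lemma \ref{Lem:RedHomUnionofPrim} and Lemma \ref{Lem:AlmSimPrim}, the only subsets $I$ with $\tilde H_\bullet(|C_I|)\neq 0$ are $\emptyset$, $I_+$, $I_-$ and $\Sigma(1)$. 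These give $H^0$, two middle cohomology groups, and $H^{n+1}$ respectively.

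Each forbidden cone must be shown to miss the open parallelogram.
\begin{itemize}
\item For $I=\Sigma(1)$: on $F_I=\sum_\rho\R_{\ge0}D_\rho$ we have $f\ge 0$. This cone corresponds to the degree-zero part, so it produces $H^0$ and is harmless.
\item For $I=\emptyset$: the cone is $F_\emptyset=-\sum_\rho D_\rho-\sum\R_{\ge0}D_\rho$, on which $f\le -\sum r_i=-1$. It therefore misses $|f|<1$.
\item For $I=I_+=\{\rho_1,\rho_2\}$: on $F_{I_+}$ the function $\alpha$ equals $-\sum_{i\ge3}\alpha_i$ plus nonnegative contributions, because $\alpha_1,\alpha_2>0$ on the rays of $I_+$ added with positive sign, and $\alpha_i\le0$ for $i\ge3$ on the rays subtracted. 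Hence $\alpha\ge-\sum_{i=3}^{n+3}\alpha_i=\alpha_1+\alpha_2$, and $F_{I_+}$ misses the parallelogram.
\item For $I=I_-$: the symmetric computation gives $\alpha\le-(\alpha_1+\alpha_2)$.
\end{itemize}
The ray $\rho_{n+3}$ with $\alpha_{n+3}=0$ is harmless in these estimates because it contributes nothing to $\alpha$. This is exactly why Lemma \ref{Lem:collectionsrandalpha} is needed. Consequently $H^{>0}(\mathcal{L}_2\otimes\mathcal{L}_1^{-1})=0$, and $\mathcal{S}$ is strong.

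\textbf{Exceptionality and ordering.} Order $\mathcal{S}$ by the value of $f$. If $f(\mathcal{L}_2)<f(\mathcal{L}_1)$, then $H^0(\mathcal{L}_2-\mathcal{L}_1)=0$, since effective classes have $f\ge0$ and $f>0$ unless the class is trivial. This holds because $f(D_\rho)=r_\rho>0$ for all $\rho$, which uses completeness and $0\in\operatorname{Int}Q$. Thus all Homs backwards vanish, and $\operatorname{End}(\mathcal{L})=k$. Together with strongness this yields an exceptional collection.

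\textbf{Fullness (main obstacle).} The plan is to show that the subcategory $\mathcal{D}$ generated by $\mathcal{S}$ contains every line bundle; since line bundles generate on the smooth toric DM stack, this gives fullness. We argue as in \cite[\S5]{BH09}, using the Koszul-type exact sequences associated with the primitive collections. Because $\{\rho_1,\rho_2\}$ spans no cone, the section sequence
\[0\to\O(-D_1-D_2)\to\O(-D_1)\oplus\O(-D_2)\to\O\to 0\]
is exact, twisted by any $\mathcal{L}$. Because $I_-$ spans no cone, the Koszul complex on $\{x_\rho\}_{\rho\in I_-}$ is also exact.

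These complexes translate a line bundle in the directions $\pm\sum_{I_\pm}D_\rho$. Any line bundle outside $p+\Delta$ sits at one end of a window of such a complex, with all the other terms lying closer to $p+\Delta$; it is measured first by $|f|$ and then by $|\alpha|$. Since $\sum_{I_+}D_\rho$ has $f=r_1+r_2$ and $\alpha=\alpha_1+\alpha_2$, and similarly for $I_-$, the width of $\Delta$ along $\alpha$ is exactly $\alpha_1+\alpha_2$. This should guarantee that each window of lattice points of length one such shift meets $\Delta$. Arguing by induction on the distance to $p+\Delta$ then gives $\mathcal{L}\in\mathcal{D}$.

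The delicate part is to check that the stacky Picard group, which may contain torsion, causes no trouble. Torsion classes have $f=\alpha=0$, so every torsion translate lies in $\mathcal{S}$. One must also check that the combinatorics of the $f$-direction still closes up despite $\rho_{n+3}$ lying in neither $I_+$-type relation. I expect this fullness induction to be the main obstacle, and would model it closely on Borisov–Hua's Picard-rank-two argument.
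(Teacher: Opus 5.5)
Your strongness and exceptionality arguments are correct and match the paper's. The three relevant forbidden cones lie in the closed half-planes $f\le -1$, $\alpha\ge\alpha_1+\alpha_2$ and $\alpha\le-(\alpha_1+\alpha_2)$, so they miss the open $2\Delta\supseteq\Delta-\Delta$. Your ordering by $f$, which the paper leaves implicit, then kills the backward $\Hom$s. (Your labelling of which $I$ gives $H^0$ versus $H^{n+1}$ switches between the $\operatorname{Supp}$ convention and the paper's definition of $F_I$. This is harmless, since $\{\emptyset,I_+,I_-,\Sigma(1)\}$ is closed under complements.)

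The genuine gap is fullness: you only sketch it, and the sketch fails as stated. The Koszul complexes on $I_+$, $I_-$ or $\Sigma(1)$ never decrease a measure taken ``first by $|f-f(p)|$, then by $|\alpha-\alpha(p)|$''. Take $\mathcal{L}$ with $|f(\mathcal{L})-f(p)|<\frac12\min_i r_i$ but $\alpha(\mathcal{L})$ far outside the window. Every other term of such a complex has $f$ shifted by $\pm\sum_{\rho_i\in J}r_i$ with $J\neq\emptyset$, so it is lexicographically farther away, and your induction cannot start. The $\alpha$-width of $\Delta$ alone decides nothing here. The $I_+$ complex lowers $f$ by up to $r_1+r_2$ and the $I_-$ complex raises it by up to $1-r_1-r_2$, so which one may be used depends on where $\mathcal{L}$ sits in the $f$-direction.

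The missing idea is the paper's two-stage sweep.
First fill the strip $|f-f(p)|<\frac12$ by sliding $p+\Delta$ at constant $f$. Split the entering side $\alpha-\alpha(p)=\frac12(\alpha_1+\alpha_2)$ at $f-f(p)=r_1+r_2-\frac12$ into two segments: $\theta_+^+$, centred at $p+\frac12\sum_{\rho\in I_+}D_\rho$, and $\theta_-^+$, centred at $p-\frac12\sum_{\rho\in I_-}D_\rho$. This split works because the $f$-width of $\Delta$ is exactly $1=(r_1+r_2)+\sum_{\rho_i\in I_-}r_i$.
\begin{itemize}
\item A line bundle $\vartheta$ entering through $\theta_+^+$ comes from the $I_+$ Koszul complex. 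Its terms $\vartheta-D_{\rho_1}$ and $\vartheta-D_{\rho_2}$ are interior, and $\vartheta-D_{\rho_1}-D_{\rho_2}$ lies on the opposite side, so it is already in $\mathcal{D}$.
\item A line bundle entering through $\theta_-^+$ comes from the $I_-$ Koszul complex, whose other terms are $\vartheta+\sum_{\rho\in K}D_\rho$ for $\emptyset\neq K\subseteq I_-$.
\end{itemize}
Only after the strip is filled do you slide in the $f$-direction. There the $I_+$ complex suffices, because its other terms lie in the already filled infinite strip.

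Your worry about $\rho_{n+3}$ is justified and concerns exactly this step. Since $\alpha_{n+3}=0$, the term $\vartheta+D_{\rho_{n+3}}$ lies on the same entering side, so several line bundles enter at once. The paper's text passes over this when it asserts all intermediate terms are interior. Meanwhile $\vartheta+\sum_{i=3}^{n+2}D_{\rho_i}$ lies on the opposite side. The fix is to treat simultaneously entering line bundles in decreasing order of $f$. Then $\vartheta+D_{\rho_{n+3}}$ has larger $f$ and still satisfies $f-f(p)<r_1+r_2+r_{n+3}-\frac12<\frac12$, so it has already been shown to lie in $\mathcal{D}$.
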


At this stage, we have finished the modifications to the construction by Borisov-Hua \cite{BH09} and the proof of \cite[Theorem 5.11]{BH09} applies verbatim to the situation we described. Since we do require specific properties of this construction to prove Theorem \ref{Thm:AlmSim}, we include the explicit proof here so that this paper is self-contained.
\begin{proof}
     Observe that the three forbidden points $q_{\emptyset},$ $q_{I_-}$ and $q_{I_+}$ lie on the boundary of the parallelogram $2\Delta$: $f(q_{\emptyset})=-1,$ $\alpha(q_{\emptyset})=0$, $f(q_{I_\pm})\in (-1,0)$ and $\alpha(q_{I_-})=1$ while $\alpha(q_{I_+})=-1$. The side of $2\Delta$ on which the forbidden point $q_I$ lies for $I\in \{\emptyset, I_+, I_-\}$ is in fact a supporting hyperplane of the corresponding forbidden cone. For instance, if $x\in F_{I_-}$ then \[
\alpha(x)=-\sum_{\rho\in I_-}\alpha_i+\sum_{\rho\in I_+}t_i\alpha_i-\sum_{j\in I_-}t_j\alpha_j\ge -\sum_{i\in I_-}\alpha_i,
\]
and similarly for the two other cones. Thus, no point in the interior of $2\Delta$ lies in either of the three forbidden cones.

To show that the collection $\mathcal{S}$ is strong exceptional is now straightforward. The line bundles correspond to points in the interior of $p+\Delta$ and so the difference between any two points lies in the interior of $2\Delta$. Consequently, for any $\mathcal{L}_1, \mathcal{L}_2\in S$, and so $\Ext^i(\mathcal{L}_1,\mathcal{L}_2)=H^i(\mathcal{X}_\Sigma,\mathcal{L}_1^\vee\otimes \mathcal{L}_2)=0$ for $i>0$ (as its image lies outside all forbidden cones).

Showing that the collection $\mathcal{S}$ is full takes a little more work. 
Divide the sides of $\Delta$ determined by constant $\alpha$ into two line segments each. Consider the four points \[
\pi_{\pm}=\pm\frac{1}{2}\sum_{\rho\in I_+}D_\rho,\quad \mu_{\pm}=\pm \frac{1}{2}\sum_{\rho\in I_-}D_\rho.
\]
Then $\alpha(\pi_{\pm})=\pm (\alpha_1+\alpha_2)$ and $\alpha(\mu_\pm)=\mp (\alpha_1+\alpha_2)$ while \[-\frac{1}{2}=-\frac{1}{2}\sum_{\rho_i\in \Sigma(1)}r_i<f(\pi_{\pm}), f(\mu_{\pm})<\frac{1}{2}=\frac{1}{2}\sum_{\rho_i\in\Sigma(1)}r_i. \]
Thus, $\pi_+, \mu_-$ lie on the side of $\Delta$ determined by $\alpha(x)=\frac{1}{2}(\alpha_1+\alpha_2)$ while $\pi_-,\mu_+$ lie on the side determined by $\alpha(x)=-\frac{1}{2}(\alpha_1+\alpha_2)$. Furthermore, \[
f(\pi_+)=\frac{1}{2}\sum_{\rho_i\in I_+}r_i=\frac{1}{2}-\frac{1}{2}\sum_{\rho_i\in I_-}r_i=\frac{1}{2}+\frac{1}{2}f(\mu_-),
\]
and so $f(\pi_+)-f(\mu_-)=\frac{1}{2}$. Thus, we can form two line segments $\theta_{+}^+,$ $\theta^+_-$ subdividing the side of $\Delta$ determined by $\alpha(x)=\frac{1}{2}(\alpha_1+\alpha_2)$ such that $\pi_+$ is the midpoint of $\theta^+_+$ and $\mu_-$ of $\theta_-^+$. Similarly we obtain two line segments $\theta^-_+$ and $\theta^-_-$ subdividing the side of $\Delta$ determined by $\alpha(x)=-\frac{1}{2}(\alpha_1+\alpha_2)$ with midpoints $\pi_-$ and $\mu_+$ respectively. We observe that the line segments $\theta_-^{\pm}$ have the same length, and the same holds for $\theta_+^{\pm}$. Figure \ref{fig:Parallelo} shows the parallelograms $\Delta$ and $2\Delta$ with the points sitting on their sides as well as the line segments $\theta_{\pm}^\pm$.

\tikzset{every picture/.style={line width=0.75pt}} 
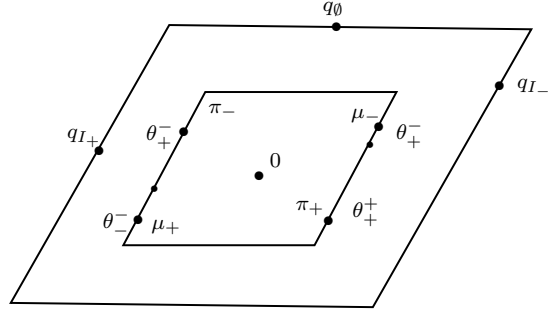
\begin{figure}[ht!]
\begin{tikzpicture}[x=0.75pt,y=0.75pt,yscale=-1,xscale=1]

\draw   (128.05,283.36) -- (310.67,285.5) -- (230.75,425.75) -- (48.13,423.61) -- cycle ;

\draw   (146.23,317.17) -- (242.67,317.17) -- (201.34,394.5) -- (104.9,394.5) -- cycle ;

\draw (13.67,363.33) node   [align=center] {\begin{minipage}[lt]{158pt}\setlength\topsep{30pt}
$ $
\end{minipage}};

\draw (204,270.07) node [anchor=north west][inner sep=0.75pt]  [font=\tiny]  {$q_{\emptyset}$};

\draw (75,334.4) node [anchor=north west][inner sep=0.75pt]  [font=\tiny]  {$q_{I_+}$};

\draw (302,309.07) node [anchor=north west][inner sep=0.75pt]  [font=\tiny]  {$q_{I_-}$};

\draw (115,330.07) node [anchor=north west][inner sep=0.75pt]  [font=\tiny]  {$\theta _{+}^{-}$};

\draw (93,374.4) node [anchor=north west][inner sep=0.75pt]  [font=\tiny]  {$\theta _{-}^{-}$};

\draw (241,329.73) node [anchor=north west][inner sep=0.75pt]  [font=\tiny]  {$\theta _{+}^{-}$};

\draw (219,369.07) node [anchor=north west][inner sep=0.75pt]  [font=\tiny]  {$\theta _{+}^{+}$};

\draw (146.30,321.73) node [anchor=north west][inner sep=0.75pt]  [font=\tiny]  {$\pi _{-}$};

\draw (190.33,370.73) node [anchor=north west][inner sep=0.75pt]  [font=\tiny]  {$\pi _{+}$};

\draw (218.33,324.07) node [anchor=north west][inner sep=0.75pt]  [font=\tiny]  {$\mu _{-}$};

\draw (117.67,380.4) node [anchor=north west][inner sep=0.75pt]  [font=\tiny]  {$\mu _{+}$};

\draw (177.33,346.73) node [anchor=north west][inner sep=0.75pt]  [font=\tiny]  {$0$};

\draw (116.67,362.73) node [anchor=north west][inner sep=0.75pt]  [font=\fontsize{0.15em}{0.18em}\selectfont]  {$\bullet $};

\draw (225.33,340.4) node [anchor=north west][inner sep=0.75pt]  [font=\fontsize{0.15em}{0.18em}\selectfont]  {$\bullet $};

\draw (131,333.4) node [anchor=north west][inner sep=0.75pt]  [font=\fontsize{0.71em}{0.25em}\selectfont]  {$\bullet $};

\draw (108,377.73) node [anchor=north west][inner sep=0.75pt]  [font=\fontsize{0.71em}{0.35em}\selectfont]  {$\bullet $};

\draw (229.33,331.07) node [anchor=north west][inner sep=0.75pt]  [font=\fontsize{0.71em}{0.25em}\selectfont]  {$\bullet $};

\draw (204,378.4) node [anchor=north west][inner sep=0.75pt]  [font=\fontsize{0.71em}{0.25em}\selectfont]  {$\bullet $};

\draw (169,355.73) node [anchor=north west][inner sep=0.75pt]  [font=\fontsize{0.71em}{0.25em}\selectfont]  {$\bullet $};

\draw (208,280.4) node [anchor=north west][inner sep=0.75pt]  [font=\fontsize{0.71em}{0.25em}\selectfont]  {$\bullet $};

\draw (88.33,343.07) node [anchor=north west][inner sep=0.75pt]  [font=\fontsize{0.71em}{0.25em}\selectfont]  {$\bullet $};

\draw (290.33,310.4) node [anchor=north west][inner sep=0.75pt]  [font=\fontsize{0.71em}{0.25em}\selectfont]  {$\bullet $};

\end{tikzpicture}
\caption{A drawing of $\Delta$ and $2\Delta$.}
    \label{fig:Parallelo}
\end{figure}

Suppose that $\vartheta$ is a point inside the interior of the line segment $\theta_{+}^+$. Then one verifies that $\vartheta-\sum_{\rho\in I_+}D_\rho\in\theta_+^-$. For any non-empty, proper subset $J\subsetneq I_+$ then $\vartheta-\sum_{\rho\in J}D_\rho$ lies in the interior of $\Delta$. 
Similarly, if $\vartheta\in\theta_-^+$, then $\vartheta+\sum_{\rho\in I_-}D_\rho\in \theta_-^-$ and for any $\emptyset\neq J\subsetneq I_-$, the point $\vartheta+\sum_{\rho\in J}D_\rho$ lies in the interior of the parallelogram $\Delta$. 

By \cite[Corollary 4.8]{BorHor1}, $\dbcoh{\mathcal{X}_\Sigma}$ is generated by line bundles and so it suffices to show that the category $\mathcal{D}$ generated by the line bundles in $\mathcal{S}$ contains all line bundles. Move the polytope $p+\Delta$ by varying $p$ in line with constant $f(p)$ until a new line bundle $\mathcal{L}=\O(E)$ appears. The argumentation for both possible choices of direction is analogous, so we assume that we move in the direction of increasing $\alpha(x)$, i.e. the new line bundle first crosses into the parallelogram $p+\Delta$ via the line segments $p+\theta_\pm^+$.  Note that the image of $\mathcal{L}$ has to lie in either the interior of $p+\theta_+^+$ or the interior of $p+\theta_-^+$ as their intersection point moves along a non-rational line due to the generic choice of $r_i, p$. Both cases are analogous, so we may assume $\mathcal{L}\in p+\theta_+^+$.

We claim that the line bundle $\mathcal{L}$ lies in $\mathcal{D}$. For any line non-empty $J\subsetneq I_+$ the line bundle $\O(E-\sum_{\rho\in J}D_\rho)$ is in $\mathcal{D}$ as its image lies inside the polytope $p+\Delta$ (and the corresponding line bundles are all in $\mathcal{D}$). Consider now the Koszul complex on $\C^{n+3}$ given by $x_\rho$, $\rho\in I_+$. This complex resolves the structure sheaf of a coordinate subspace outside of $U_\Sigma$, yielding a long exact sequence of sheaves on $\mathcal{X}_\Sigma$ (see also \cite{BorHor1}).

Twisting by $\mathcal{L}$, we obtain\[
0\rightarrow \O(E-\sum_{\rho\in I_+}D_\rho)\rightarrow \dots\rightarrow \bigoplus_{\rho\in I_+}\O(E- D_\rho)\rightarrow \mathcal{L}\rightarrow 0.
\]

\noindent Since all terms in this long exact sequence are in $\mathcal{D}$ apart from the last, we also have $\mathcal{L}\in\mathcal{D}$.

Thus, so long as the image $q$ of a line bundle $\mathcal{L}$ satisfies $|f(q-p)|\le \frac{1}{2},$ we have shown $\mathcal{L}\in\mathcal{D}$. A similar argument works for moving the parallelogram in the direction of constant $\alpha$ by using the Koszul resolutions for $I_+$ (or equivalently $I_-$), noting that $-1<f(-\sum_{\rho\in J}D_\rho)<0$ for any non-empty subset $J\subsetneq I_+$. So we have shown that all line bundles lie in $\mathcal{D}$ and thus $\mathcal{S}$ is a full strong exceptional collection.
\end{proof}

Using the explicit full, strong exceptional collection $\mathcal{S}$ constructed above now suffices to prove the Theorem \ref{Thm:AlmSim}.

\begin{proof}[Proof of Theorem \ref{Thm:AlmSim}]
Consider the tilting bundle $\mathcal{T}=\bigoplus_{\mathcal{L}\in \mathcal{S}}\mathcal{L}$ built from the full, strong exceptional collection $\mathcal{S}$. In view of Theorem \ref{Thm:NovaDMstack}, we aim to verify the following vanishing condition:\[
H^i\left(\mathcal{X}_\Sigma, \mathcal{T}\otimes \mathcal{T}^\vee\otimes \O\left(l\cdot\sum_{\rho\in\Sigma(1)}D_\rho\right)\right)=0\text{ for }i,l>0.
\]
The components of $\mathcal{T}\otimes\mathcal{T}^\vee$ are of the form $\mathcal{L}'=\mathcal{L}_1\otimes\mathcal{L}_2^\vee$ for $\mathcal{L}_i\in S$ and so $\mathcal{L}'\in\Delta$. Tensoring with $\O\left(l\cdot\sum_{\rho\in \Sigma(1)}D_\rho\right)$ moves $\mathcal{L}'$ in $\operatorname{Pic}_\R(\mathcal{X}_\Sigma)$ on a line of constant $\alpha$, away from the forbidden cone $F_\emptyset$ and parallel to the supporting hyperplanes of the forbidden cones $F_{I_\pm}$. Thus, for any $l>0$, the line bundle $\mathcal{L}'\otimes \O\left(l\cdot\sum_{\rho\in \Sigma(1)}D_\rho\right)$ remains outside the three forbidden cones and hence is acyclic, as required. Theorem \ref{Thm:NovaDMstack} thus shows that the toric algebra associated to $\cone(Q\times\{1\})$ has a toric NCCR. Since $P\times\{1\}$ is (up to lattice equivalence) a face of $Q$, Theorem \ref{Thm:FaceOfReflexiveTORICNCCR} shows that $R=k[\sigma^\vee\cap M]$ has a toric NCCR, as claimed.
\end{proof}

\begin{remark}
    Whilst Corollary \ref{Cor:SimplicialHasNCCR} and Theorem \ref{Thm:AlmSim} are not new results, having been proven in \cite{FMS19} and \cite{Tomo25} respectively, the descent of toric NCCRs via Theorem \ref{Thm:FaceOfReflexiveTORICNCCR}, we provides a unified approach that works in both cases. 
    Affine toric Gorenstein varieties of higher Picard rank no longer necessarily admit toric NCCRs, as a counterexample by  \v{S}penko and Van den Bergh \cite[Example 6.1]{SVdBtoricI} shows. The toric algebra is associated to a four-dimensional cone $\sigma=\cone(P\times\{1\})$ with 6 extremal rays, for which no toric NCCRs exist. Consequentially, any reflexive polytope containing the polytope $P$ as face itself cannot admit a toric NCCR. Thus, any smooth toric DM stack $\mathcal{X}_\Sigma$ where $\Sigma$ simplicially subdivides the face fan of $P$ cannot admit a tilting bundle that fulfills the required cohomology vanishing condition to give a tilting bundle on $[\tot\omega_{X_\Sigma}]$. However, the authors hope that proving Conjecture \ref{Conj:FaceRkBigger1} can help give a general proof to Conjecture \ref{Conj:affinetoric} in a similar manner as was done here for the cases of (almost) simplicial toric algebras.
\end{remark}

\bibliographystyle{alpha}
\bibliography{bib}
\end{document}